\documentclass[reqno, 11pt]{amsart}
\usepackage[hmargin=3cm,vmargin=3cm]{geometry}
\usepackage{amsmath,amssymb,amsthm,}
\usepackage{verbatim,epsfig,graphics,hyperref}
\usepackage{marginnote}
\usepackage{color}
\usepackage{mathrsfs}
\usepackage{tikz}
\usepackage{svg}
\usepackage{esint}

\newtheorem{theorem}{Theorem}[section]
\newtheorem{proposition}[theorem]{Proposition}

\newtheorem{lemma}[theorem]{Lemma}

\theoremstyle{definition}

\newtheorem{remark}[theorem]{Remark}

\newcommand{\del}{\partial}
\newcommand{\eps}{{\varepsilon_\circ}}
\newcommand{\dd}{\, \mathrm{d}}

\DeclareMathOperator{\tube}{w}

\DeclareMathOperator{\DtN}{\mathscr{D}}
\DeclareMathOperator{\inj}{inj}
\DeclareMathOperator{\dist}{dist}

\DeclareMathOperator{\cusp}{{\mathscr{K}}}
\newcommand{\thick}{{\scriptscriptstyle{\text{thick}}}}
\newcommand{\thin}{\scriptscriptstyle{\text{thin}}}

\newcommand{\double}{\mathcal{D}}
\newcommand{\curve}{{\boldsymbol{c}}}
\newcommand{\ball}{{{\mathcal{B}}}}

\usepackage{enumitem}

\newcommand{\chain}{{\mathcal{C}}}
\newcommand{\ellSt}{\ell}

\DeclareMathOperator{\arsinh}{arsinh}
\DeclareMathOperator{\collar}{\mathscr{C}}

\title[low Steklov eigenvalues of finite volume hyperbolic surfaces]{Geometric bounds for low Steklov eigenvalues of finite volume hyperbolic surfaces}

\author[Hassannezhad]{Asma Hassannezhad}
\address{University of Bristol,
School of Mathematics,
Fry Building,
Woodland Road,
Bristol, 
BS8 1UG, U.K.}
\email{asma.hassannezhad@bristol.ac.uk}

\author[M\'etras]{Antoine M\'etras}
\address{Institut de Mathématiques, 
Université de Neuchâtel, 
Rue Emile-Argand 11,
2000 Neuchâtel, Suisse}
\email{antoine.metras@unine.ch}

\author[Perrin]{H\'el\`ene  Perrin}
\address{Institut de Mathématiques, 
Université de Neuchâtel, 
Rue Emile-Argand 11,
2000 Neuchâtel, Suisse}
\email{heleneperrin19@gmail.com}
\subjclass[2020]{35P15, 58C40}
\keywords{Steklov eigenvalues, hyperbolic surfaces, eigenvalue lower bounds.}
\begin{document}

\maketitle

\begin{abstract}
We obtain geometric lower bounds for the low Steklov eigenvalues of finite-volume hyperbolic surfaces with geodesic boundary. The bounds we obtain depend on the length of a shortest multi-geodesic disconnecting the surfaces into connected components each containing a boundary component and the rate of dependency on it is sharp. Our result also identifies situations when the bound is independent of the length of this multi-geodesic. The bounds also hold when the Gaussian curvature is bounded between two negative constants and can be viewed as a counterpart of the well-known Schoen-Wolpert-Yau inequality for Laplace eigenvalues.  The proof is based on analysing the behaviour of the {corresponding Steklov} eigenfunction on an adapted version of thick-thin decomposition for hyperbolic surfaces with geodesic boundary. Our results extend and improve the previously known result in the compact case obtained by a different method.
 \end{abstract}

\section{Introduction}
Let $\Sigma$ be a connected finite volume hyperbolic surface with geodesic boundary, and let $b\ge1$ denote the number of boundary components. We consider 
the Dirichlet-to-Neumann map $\DtN$ 
\begin{eqnarray*}
    \DtN: C^\infty(\partial \Sigma) & \to & C^\infty(\partial \Sigma) \\
    f & \mapsto & \partial_\nu \tilde f,
\end{eqnarray*}
where $\tilde f$ is the harmonic extension of $f$ to $\Sigma$, and $\nu$ is the outward unit normal vector field along $\partial \Sigma$. By the standard spectral theory for self-adjoint operators, we have that its spectrum is discrete and each eigenvalue has finite multiplicity, see Section \ref{sec:per}.
    Let $0 = \sigma_0(\Sigma) < \sigma_1(\Sigma) \leq \cdots \leq \sigma_k(\Sigma) \leq \cdots \nearrow \infty$ be the sequence of its eigenvalue, also called the Steklov eigenvalues. The focus of this paper is on the study of geometric bounds for $\sigma_k(\Sigma)$ when $1 \leq k \leq b-1$ and $b > 1$.

Note that a lower bound for $\sigma_b(\Sigma)$ can be easily obtained by using the collar theorem and comparing $\sigma_b(\Sigma)$ with the $b$-th mixed Steklov-Neumann eigenvalue of a domain composed of a {union of disjoint half-collars about boundary geodesics}, and the lower bound will depend only on the length of the boundary (see e.g. \cite[Lemma 3]{Per22}). Hence, the study of the spectral gap and bounds on Steklov eigenvalues becomes an intriguing question when $k < b$. {We can refer to them as the \textit{low} Steklov eigenvalues.}

Lower bounds for the spectral gap of the Steklov problem on a compact Riemannian manifold with boundary have been studied by Jos\'e Escobar \cite{Esc97,Esc99}, and later by Pierre Jammes \cite{Jam15} where he obtained a Cheeger-type lower bound, see also \cite{HM}.
The question of obtaining more explicit geometric bounds for low Steklov eigenvalues has been recently studied in \cite{Per22,
Per23,HHH22,BBHM}.

 On a compact hyperbolic surface $\Sigma$,  H\'el\`ene Perrin \cite{Per22} obtained a geometric lower bound on a modified version of Cheeger-Jammes' constant in terms of the length of the shortest {multi-geodesic}
 separating $\Sigma$ into $k+1$ connected components, each of them containing at least one boundary component and showed that it is of great relevance in the estimate of low Steklov eigenvalues, {in particular, 
 she obtained lower and upper bounds for low eigenvalues of $\Sigma$ in terms of the length of {this multi-geodesic}.}
Let us state her result more precisely.  

For a given hyperbolic surface $\Sigma$, let $\chain_k$ be the set of multi-geodesics which consist of a union of disjoint simple closed geodesics, not intersecting $\partial \Sigma$, and dividing $\Sigma$ into $k+1$ connected components, each containing at least one connected component of $\partial \Sigma$. We define 
    \begin{align}\label{defn:ell}
        \ellSt_k := \inf_{\curve \in \chain_k} |\curve|,
    \end{align}
    where $|\curve|$ is the length of the {multi-geodesic} $\curve$. When $\chain_k=\emptyset$, we set $\ell_k=\infty$.
    
{For a compact hyperbolic surface $\Sigma$ of genus $g$  with $b$ geodesic boundary components of length $\leq2\arsinh(1)$, the
result in \cite{Per22} states that, assuming that $g\not = 0$ or $b> 3$,
there exists a constant $C_1$, depending only on $b$ and on $g$, and a universal constant $C_2$ such that for $1\leq k<\lceil \frac{b}{2}\rceil$ we have
\begin{equation}\label{thm:perrin}
C_1\ellSt_k^2\leq\sigma_k\leq C_2 \frac{\ellSt_k}{{\alpha}},
\end{equation}
{where $\alpha$ is the minimum length of geodesic boundary components.}
The inequality also holds for $\lceil \frac{b}{2}\rceil\leq k <b$, provided that $\chain_k\neq\emptyset$ {and $\ellSt_k$ is bounded above in terms of $g$ and $b$}.

{This result can be viewed as a counterpart of a result by Schoen, Wolpert and Yau \cite{SWY80} for  Laplace eigenvalues of a closed hyperbolic surface $\Sigma$.}
They showed that for $1 \leq k \leq 2g-3$, $\lambda_k$
is bounded above and below by positive constants (depending {only} on $g$ and $k$) times the length of the shortest multi-geodesic dividing $\Sigma$ into $k + 1$ connected components. There have been several studies on extending the Schoen-Wolpert-Yau inequality to noncompact surfaces and investigating the asymptotic behavior as the length of the multi-geodesic tends to zero in \cite{Dod,DR86,DRS87,burger88,burger90,GR19}.

In this article, we improve the power of $\ell_k$ in the lower bound of \eqref{thm:perrin} to achieve the optimal power as in the Schoen-Wolpert-Yau inequality.
Additionally, we generalise this lower bound by removing the upper bound on the maximum length of boundary components, {obtain a lower bound for all $k<b$}, and state the result in the context of noncompact finite volume hyperbolic surface.

\begin{theorem}\label{thm:main}
    Let $\Sigma$ be a finite volume hyperbolic surface with $b \ge 1$ geodesic boundary components. Let $\chi,g,p$ denote the Euler number of $\Sigma$, the genus and the number of cusps respectively, and let $\beta$ be the maximum length of the boundary components. We define
    \begin{align*}
        K := \begin{cases}
                b - 1 & \text{if $(g \geq 1 $ or $ p \geq 2)$ and $b \geq 1$}, \\
                b - 2 & \text{if $g = 0, p = 1$ and $b \geq 2$}, \\ 
                b - 3 & \text{if $g = 0, p = 0$ and $b \geq 3$}. 
        \end{cases}
    \end{align*}
    Then there exists a positive universal constant $C$ such that
          \begin{align*}
     \sigma_k(\Sigma)
 \ge \frac{C}{b|\chi|^3} \min\left\{\frac{1}{(1+\beta)^2e^\beta},\frac{\ellSt_k}{\beta}\right\},\qquad 0<k\le K,
    \end{align*}
    and
    \begin{align*}
        \sigma_{K+1}(\Sigma) \geq \frac{C}{b\chi^2(1+\beta)^2e^\beta}. 
    \end{align*}
    \end{theorem}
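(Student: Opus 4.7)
The plan is to argue via the min-max characterization of Steklov eigenvalues,
\[
\sigma_k(\Sigma) = \inf_{V} \sup_{u \in V\setminus\{0\}} \frac{\int_\Sigma |\nabla u|^2}{\int_{\partial\Sigma} u^2},
\]
with $V$ ranging over $(k+1)$-dimensional subspaces of $H^1(\Sigma)$. The geometric input is a thick--thin decomposition of $\Sigma$ adapted to both the boundary and the cusps: the thin part gathers standard collars around short simple closed geodesics, half-collars along each geodesic boundary component, and horocyclic neighbourhoods of the cusps, while the thick part has injectivity radius bounded below by a universal constant. Given $\curve \in \chain_k$ realising $\ell_k$, I would write $\Sigma \setminus \curve = \Sigma_0 \sqcup \cdots \sqcup \Sigma_k$, each piece containing at least one boundary component.

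The first step is to show quantitatively that any $u \in H^1(\Sigma)$ with small Dirichlet energy is close in $L^2$ to a function which is locally constant on the thin part. On a standard collar I would expand $u$ in Fourier modes along the central geodesic; each non-constant mode satisfies an ODE on the transverse interval whose $L^2$ norm is controlled by its Dirichlet energy, so the oscillation of $u$ across every cross-section is controlled by $\int_\Sigma |\nabla u|^2$. A parallel analysis in horocyclic coordinates handles the cusps, while a Poincar\'e--Wirtinger inequality on each thick piece --- with constant polynomial in $|\chi|$ thanks to the universal injectivity radius lower bound --- connects the constant modes from one thin region to the next. The output is a dichotomy for any $u \in V$: either $\int_\Sigma |\nabla u|^2$ is already large enough that the Rayleigh quotient, after a trace estimate on the half-collars that introduces the factor $(1+\beta)^2 e^\beta$, exceeds the target $C/(b|\chi|^3(1+\beta)^2 e^\beta)$; or $u$ is $L^2$-close to a function that takes a single constant value $a_i$ on each $\Sigma_i$.

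In the latter case I would use the $(k+1)$-dimensionality of $V$ to select a non-zero $u \in V$ for which the $a_i$ are not all equal, and then exploit $\curve$ directly. A direct Dirichlet energy estimate on a fixed-width neighbourhood of each closed geodesic $\gamma \subset \curve$ separating $\Sigma_i$ from $\Sigma_j$ yields a lower bound of the form $(a_i-a_j)^2\, |\gamma|$ on the contribution of that neck to $\int_\Sigma |\nabla u|^2$, so summing over all components of $\curve$ produces the linear factor $\ell_k$. Comparing with the boundary integral $\int_{\partial\Sigma} u^2 \asymp \sum_i a_i^2\, |\partial\Sigma\cap\Sigma_i|$ with $|\partial\Sigma\cap\Sigma_i| \le b\beta$ extracts the $\ell_k/\beta$ branch. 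The bound on $\sigma_{K+1}$ falls out of the first branch of the dichotomy, since $\chain_{K+1} = \emptyset$ forces any $(K+2)$-dimensional subspace $V$ to meet the large-energy alternative for some $u$.

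The main obstacle is obtaining the sharp linear dependence on $\ell_k$ rather than the quadratic dependence of \eqref{thm:perrin}: a Cheeger-type argument alone yields only $\sigma_k \gtrsim \ell_k^2$, so one must replace it with the neck-by-neck energy estimate above, carefully tracking how the locally constant approximants on the two sides of each pinch are allowed to differ and absorbing the error terms into the first branch of the dichotomy. A secondary technical point is the unified treatment of collars, half-collars and cusps, whose conformal structures differ and which demand separate Fourier analyses, all the while keeping the combinatorial constant polynomial in $|\chi|$ and linear in $b$, and the geometric dependence on $\beta$ optimal.
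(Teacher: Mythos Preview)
Your high-level plan---thick--thin decomposition, neck energy estimates, and a locally-constant approximation---is in the right spirit, but it diverges from the paper's argument at a crucial point and contains a real gap.

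The paper does \emph{not} run a direct min-max analysis for $\sigma_k$. Instead it (i) proves the case $k=1$ by working with the harmonic $\sigma_1$-eigenfunction, and (ii) deduces the general case by cutting $\Sigma$ along (all but one geodesic of) a minimising $\curve\in\chain_k$ and invoking the Neumann bracketing $\sigma_k(\Sigma)\ge\min_i\sigma_1^N(\Sigma_i)$, then applying step~(i) on each piece. The reason harmonicity matters is Lemma~\ref{lem:sobinq}: the $L^\infty$ oscillation bound on a thick component, $|\varphi(x)-\varphi(y)|\le c(\beta)\sqrt{\sigma|\Sigma_0|}$, comes from the interior gradient estimate $\|\nabla\varphi\|_{\infty,\ball/2}\le c r^{-1}\|\nabla\varphi\|_{L^2(\ball)}$, which is an elliptic estimate valid for harmonic $\varphi$ and fails for a general $H^1$ function (this is exactly the borderline failure of Sobolev embedding in dimension~2). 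Your proposal replaces this by a Poincar\'e--Wirtinger inequality on each thick piece ``with constant polynomial in $|\chi|$ thanks to the universal injectivity radius lower bound.'' That assertion is not justified: a thick piece may have large diameter and complicated topology, and an $L^2$ Poincar\'e constant polynomial in $|\chi|$ is neither standard nor proved here. Even if you had it, an $L^2$ closeness to a constant does not feed directly into Lemma~\ref{lem:tube-energy}, which requires a \emph{pointwise} lower bound on the jump across the collar; you would need an averaged variant of that lemma and a trace estimate linking the thick-piece $L^2$ average to the collar boundary averages, none of which you supply.

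Your treatment of $\sigma_{K+1}$ is also incomplete. Your dichotomy is set up using a fixed $\curve\in\chain_k$; when $\chain_{K+1}=\emptyset$ there is no such $\curve$, so ``forces any $(K+2)$-dimensional $V$ to meet the large-energy alternative'' has no content as written. The paper handles this by taking the largest $s\le k$ with $\ell_s^{\eps}<\infty$, cutting along a minimiser in $\chain_s$, and observing that each resulting piece has $\ell_1^{*,\eps}=\infty$ (else $s$ would not be maximal), so the $\sigma_1^N$ bound on each piece reduces to the $\beta$-only term. If you want to salvage your approach, the cleanest fix is to work with the actual eigenfunction (or the eigenspace), so that harmonicity is available and Lemma~\ref{lem:sobinq} applies, and to imitate the paper's cut-and-bracket reduction for $k>1$ and for $k=K+1$.
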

    As a consequence of a topological lemma (see Lemma \ref{lem:C_k}), we show that $\ell_k<\infty$ for every $1\le k \le K$. In particular, {since} there exists a surface for which  $\ell_k$  can be arbitrary small,  Theorem \ref{thm:main} shows that there is always a spectral gap between $\sigma_{K+1}$ and $\sigma_K$ when $\ell_K\to 0$.
    
         By combining {Theorem \ref{thm:main}} with the classical upper bound for Steklov eigenvalues of compact surfaces given in \cite{CGE,H11}, {which can be readily extended to the context of finite volume surfaces}, and with the upper bound in \cite{Per22} stated in (\ref{thm:perrin}) above {(see Remark \ref{rem:upperbound})}, we have that there exist positive  constants  $C_3=C_3(\chi,\beta)$, $C_4=C_4(\chi,\alpha)$, and $C_5=C_5(\chi,\beta)$ where $\alpha$ is the minimum length of the boundary components, such that
    \begin{align*}
       C_3 \min\{1,{\ellSt_k}\}\le  \sigma_k(\Sigma) \leq C_4 \min\{1,{\ellSt_k}\},\qquad 1\le k\le K 
    \end{align*}
    and
    \begin{align*}
       C_3\le \sigma_{K+1}(\Sigma) \leq C_5.
    \end{align*} 
   {We want to highlight here some special cases.}
   With the assumption that $\beta\leq 2\arsinh(1)$, by combining our result with Theorem 3 and Lemma 3 of \cite{Per22}, we have
    \begin{align}
    \frac{C_6}{b |\chi|^3}\min\{1,\frac{\ellSt_k}{\beta}\}\leq  \sigma_k(\Sigma) \leq C_7\min\{1, \frac{\ellSt_k}{\alpha}\},\qquad 1\le k\le K,
    \end{align}
    where $C_6$ and $C_7$ are universal constants.

  Assume that $\ell_k$ is bounded above in terms of a constant depending only on $\chi$. It is the case for example when 
  $k<\min \{\lceil\frac{b}{2}\rceil, K+1\}$ as shown in \cite{Per22}.
  {Then there exist positive constants $C_8(\chi, \beta)$ and $C_9(\beta)$ such that}
   \[C_8{\ellSt_k} \le \sigma_k\le C_9\frac{\ellSt_k}{
 \alpha
 },\]
{and $C_8$ and $C_9$ {can be} independent of $\beta$ when}
$\beta\le2\arsinh(1)$; this recovers an improved version of \eqref{thm:perrin} with an optimal  dependency on $\ell_k$.
Figure \ref{fig:large_ell_k} illustrates an example for which $\ell_k$ can be arbitrarily large for {$k=\lceil\frac{b}{2}\rceil$}. Hence, the above bound cannot hold for $k\ge\min \{\lceil\frac{b}{2}\rceil, K+1\}$ in general.

When $\ell_k$ tends to zero, {the combination of Theorem \ref{thm:main} and the upper bound in \cite{Per22} 
 as stated in \eqref{thm:perrin} (see Remark \ref{rem:upperbound}) 
}
implies that 
\begin{equation}\label{eq:lim}
  \frac{C}{b|\chi|^3\beta}\le\liminf_{\ell_k\to0}\frac{\sigma_k}{\ell_k}\le \limsup_{\ell_k\to0}\frac{\sigma_k}{\ell_k}\le \frac{C_2}{\alpha},\qquad 1\le k\le K,  
\end{equation}
where $C$ and $C_2$ are positive universal constants as mentioned above.
In particular, when $\beta=\alpha$ or $\alpha$ is a constant multiple of $\beta$, it shows the optimality of the power of $\beta$.
In general, when $\chi$  or $\beta/\alpha$ is large,  there will be a big gap between the upper and lower bound in inequality \eqref{eq:lim}. The study of the asymptotic of  $\frac{\sigma_k}{\ell_k}$ as $\ell_k\to0$ is an intriguing question. We refer to  \cite{Col85, burger88,GR19,Cha} for related studies for the Laplace eigenvalues.

It is also very interesting to investigate the optimality of the dependency of the lower bound on $\chi$. The power of $|\chi|$ in our lower bound for $\sigma_1$ obtained in Proposition~\ref{lem:ell_1} is $-2$. From \cite[Example 5.1]{BBHM}, we know that there exists a sequence of hyperbolic surfaces for which $\sigma_{b-1}$ decays at rate $\frac{1}{|\chi|}$.  It remains open whether the optimal power of $|\chi|$ is $-1$.  In the case of the Laplacian, Wu and Xue~\cite{YY} showed that the optimal power of $|\chi|$ in the lower bound for the first non-zero Laplace eigenvalue of a closed hyperbolic surface is $-2$.

The proof of Theorem \ref{thm:main} uses a different approach that the one used in \cite{Per22}. It is inspired by  Dodziuk-Randol's proof of  the Schoen-Wolpert-Yau inequality in \cite{DR86}, see also \cite{Dod,DRS87}. {Inspired by their approach,
we analyse the behaviour of Steklov eigenfunction on an adapted version of the thick-thin decomposition of a hyperbolic surface.  
A similar adaptation} is also used in \cite{BBHM} to obtain a geometric lower bound for the spectral gap in pinched negatively curved manifolds of dimension at least 3. However, the situation differs in dimension 2; unlike higher dimensions, the \textit{thick} part is not connected, presenting its own challenge.\\

Since the Steklov eigenvalues are invariant under any conformal change in the interior, Theorem \ref{thm:main} holds true for any Riemannian surface $\Sigma$ that is conformally equivalent to a hyperbolic surface with geodesic boundaries, with a conformal factor equal to 1 along the boundary.
Let $(\Sigma,h)$ be conformal to a hyperbolic surface {$(\Sigma,\bar h)$} with geodesic boundary and the conformal factor $f$, where $h=f\,\bar h$, satisfies  $A^{-1} \le f|_{\partial \Sigma}\le A$ for some constant $A\ge1$.  Then using the variational characterisation of the Steklov eigenvalues, we get
\[A^{{-1/2}}\, \sigma_k(\Sigma,\bar h)\le\sigma_k(\Sigma,h)\le A^{{1/2}}\sigma_k(\Sigma,\bar h).\]

{When $(\Sigma, h)$ is a compact Riemannian surface with Gaussian curvature in the interval $[-1, -\kappa]$ for some $\kappa > 0$, we show that it is conformal to a hyperbolic surface $(\Sigma, \bar{h})$ with geodesic boundary. Moreover, the conformal factor $f$ satisfies $1 \leq f \leq \kappa^{-1}$, ensuring that all results above remain valid, with ${\kappa}$ appearing as a multiplicative factor in the lower bound. For non-compact finite-volume surfaces with Gaussian curvature within the same range, similar bounds still hold. For further details, see Theorem \ref{rem:pinched curvature2}.\\
}

{In general}, we can ask  whether one can conformally deform a surface with boundary to obtain a hyperbolic surface with geodesic boundary while the conformal factor remains bounded. Uniformisation theorems for surfaces with boundary are studied in \cite{OPS88,Bre02a,Bre02b,Rup21}. {In particular, it is known that for a compact Riemannian surface  $(\Sigma,h)$ with boundary, 
 when} the integral of the geodesic curvature along $\partial \Sigma$ is non-negative, there exists a unique hyperbolic metric $\bar h=f h$ in the conformal class of $h$ such that the boundary of $(\Sigma,\bar h)$ is geodesic. The metric  $\bar h$ is called a \textit{uniform metric}.  However, the resulting surface may not be compact.

We can construct examples of a sequence of Riemannian surfaces $\Sigma_\epsilon$ with $\chi(\Sigma_\epsilon) < 0$, such that for any given $k \geq 1$, $\lim_{\epsilon \to 0} \sigma_k(\Sigma_\epsilon) = 0$. This sequence can be constructed by slightly modifying the example given in~\cite[Section 2.2]{GP10}, as illustrated in Figure \ref{fig:example}. This demonstrates that for $\epsilon$ small enough, $\Sigma_\epsilon$ cannot be conformally equivalent to a hyperbolic surface with a conformal factor equal to 1 on the boundary.
\begin{figure}[h]
    \centering
    \def\svgwidth{0.4\textwidth}
\begingroup%
  \makeatletter%
  \providecommand\color[2][]{%
    \errmessage{(Inkscape) Color is used for the text in Inkscape, but the package 'color.sty' is not loaded}%
    \renewcommand\color[2][]{}%
  }%
  \providecommand\transparent[1]{%
    \errmessage{(Inkscape) Transparency is used (non-zero) for the text in Inkscape, but the package 'transparent.sty' is not loaded}%
    \renewcommand\transparent[1]{}%
  }%
  \providecommand\rotatebox[2]{#2}%
  \newcommand*\fsize{\dimexpr\f@size pt\relax}%
  \newcommand*\lineheight[1]{\fontsize{\fsize}{#1\fsize}\selectfont}%
  \ifx\svgwidth\undefined%
    \setlength{\unitlength}{245.2466349bp}%
    \ifx\svgscale\undefined%
      \relax%
    \else%
      \setlength{\unitlength}{\unitlength * \real{\svgscale}}%
    \fi%
  \else%
    \setlength{\unitlength}{\svgwidth}%
  \fi%
  \global\let\svgwidth\undefined%
  \global\let\svgscale\undefined%
  \makeatother%
  \begin{picture}(1,0.57378045)%
    \lineheight{1}%
    \setlength\tabcolsep{0pt}%
    \put(0,0){\includegraphics[width=\unitlength,page=1]{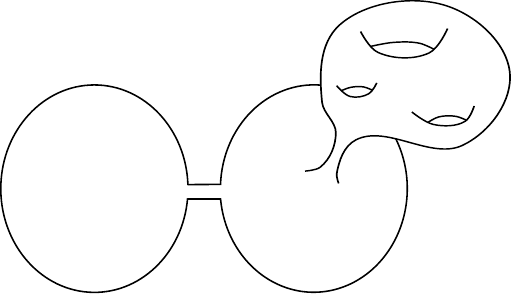}}%
    \put(0.27726191,0.18609022){\color[rgb]{0,0,0}\makebox(0,0)[lt]{\lineheight{1.25}\smash{\begin{tabular}[t]{l}$\epsilon^3$\end{tabular}}}}%
    \put(0,0){\includegraphics[width=\unitlength,page=2]{small-sigma.pdf}}%
    \put(0.37219247,0.1215267){\color[rgb]{0,0,0}\makebox(0,0)[lt]{\lineheight{1.25}\smash{\begin{tabular}[t]{l}$\epsilon$\end{tabular}}}}%
  \end{picture}%
\endgroup%

    \def\svgwidth{0.3\textwidth}
    \caption{$\Sigma_\epsilon$ is obtained as a union of two discs connected by a thin neck of length $\epsilon$ and width $\epsilon^3$, and removing a small disc around the centre of one of the discs, then performing a connected sum with a surface of genus at least $1$. }
    \label{fig:example}
\end{figure}
Moreover, by slightly modifying the example above, we can assume the geodesic curvature along $\partial \Sigma$ is non-negative. Hence, the conformal factor of the uniform metric along the boundary cannot remain uniformly bounded, and the answer to the question above is negative.\\

The paper is organised as follows. In Section 2, we cover some preliminaries, including the definition of an adapted version of the thick-thin decomposition for hyperbolic surfaces with geodesic boundary, and the result of Dodziuk and Randol \cite{DR86} on the behaviour of eigenfunctions in the thick and thin parts. Section 3 is devoted to the proof of the main result. We first present a topological lemma demonstrating when $\ell_k$ is achieved. Then we prove Theorem \ref{thm:main} for $k=1$ and show that the main theorem can be derived from this case.

\section*{Acknowledgement}
The authors would like to thank Bruno Colbois for useful discussions and interest in the project, {and are also grateful to the anonymous referees for their helpful comments.} A.\,H. and A.\,M. acknowledge support of EPSRC grant EP/T030577/1. A.\,M. also acknowledges support of the SNSF project ``Geometric Spectral Theory"  grant number 200021-19689.
\section{Preliminaries}\label{sec:per}
Throughout the paper, we assume that $\Sigma$ is a finite volume connected hyperbolic surface with nonempty geodesic boundary unless otherwise stated. We say that $\Sigma$ {is} of signature $(g,b;p)$ if it has genus $g$, $b$ geodesic boundary components, and $p$ cusps.

\subsection*{Steklov problem} When $\Sigma$ is compact, i.e. is of signature $(g,b;0)$, the Dirichlet-to-Neumann map $\DtN$ 
\begin{eqnarray*}
    \DtN: C^\infty(\partial \Sigma) & \to & C^\infty(\partial \Sigma) \\
    f & \mapsto & \partial_\nu \tilde f,
\end{eqnarray*}
where $\tilde f$ is the harmonic extension of $f$ to $\Sigma$, and $\nu$ is the outward unit normal vector field along $\partial \Sigma$, is a self-adjoint first-order elliptic pseudo-differential operator and its spectrum consists of a discrete sequence of non-negative real numbers with the only accumulation point at infinity, see e.g. \cite{LMP}. 

The Dirichlet-to-Neumann operator on  non-compact geometrically finite manifolds has been recently studied in \cite{Pol21}. However, in our setting, we explain that the discreteness of its spectrum is a consequence of classical theory.

Given a finite volume hyperbolic surface $\Sigma$, let $\double\Sigma$ denote the double of $\Sigma$ along its geodesic boundaries. It is a complete finite volume hyperbolic surface. 
We first briefly recall the spectral theory of the Laplace-Beltrami operator on a finite volume noncompact hyperbolic surface.  It is well-known that $\Delta$ is essentially self-adjoint and has a unique Friedrich extension. Its spectrum consists of a sequence of eigenvalues $0=\lambda_0<\lambda_1\le \lambda_2\le \cdots$ and continuous spectrum $[\frac{1}{4},\infty)$ (see e.g. \cite{Mul92}). There are finitely many eigenvalues in interval $[0,\frac{1}{4})$.  The multiplicity of $0$ is 1 and corresponds to the constant functions.

Let us consider the Dirichlet Laplacian on $\Sigma$. Then we immediately get that the bottom of the Dirichlet spectrum 
\[\lambda_0^D(\Sigma)=\inf_{0\neq f\in H^1_0(\Sigma)}\frac{\int_M|\nabla f|^2}{\int_Mf^2}\]
is strictly positive because $\lambda_0^D(\Sigma)\ge\min\{1/4,\lambda_1(\double\Sigma)\}$. It implies that for any $f\in C^\infty(\partial \Sigma)$, there is a unique square integrable harmonic extension $\tilde f$ to $\Sigma$. 
Hence, the Dirichlet-to-Neumann map $\DtN$ is well defined.  It is symmetric and positive.  We consider its Friedrich extension, also denoted by $\DtN$. 
The map $\DtN$ is a first-order elliptic operator and its spectrum consists of a discrete sequence of non-negative real numbers with the only accumulation point at infinity. The discreteness of the spectrum follows from the compactness of the trace operator $T:H^1(\Sigma)\to L^2(\partial \Sigma)$.
 The eigenvalues of the Dirichlet-to-Neumann map are the same as  the eigenvalues of the {Steklov problem}:
\[\begin{cases}
    \Delta u = 0 & \text{in $\Sigma$},\\
    \partial_\nu u = \sigma u &\text{on $\del \Sigma$},
\end{cases}\]
 We enumerate them in increasing order counting their multiplicities:
\begin{equation*}0 = \sigma_0 < \sigma_1 \le \sigma_2 \le \cdots \nearrow\infty.\end{equation*}
We have the following variational characterisation of the Steklov eigenvalues. 
\begin{equation}\label{eq:var}
    \sigma_k=\inf_{V_{k+1}}\sup_{{0\neq} f\in V_{k+1}}\frac{\int_\Sigma |\nabla f|^2}{\int_{\partial\Sigma} f^2},\end{equation}
where $V_{k+1}$ is a $(k+1)$-dimensional subspace of $H^1(\Sigma)$.

In the subsequent sections, we also consider the mixed Steklov-Dirichlet and mixed Steklov-Neumann eigenvalue problems, where either the Dirichlet or Neumann boundary condition is assumed on a portion of the boundary.

{The variational characterization of the Steklov-Neumann and Steklov-Dirichlet eigenvalues is similar to that of the Steklov eigenvalues. The only difference is that the integration in the denominator of {the Rayleigh quotient in} \eqref{eq:var} is restricted to the Steklov part of the boundary. For the Steklov-Dirichlet problem, we should also restrict the functional space to those functions that vanish on the part of the boundary with the Dirichlet condition.  }

\subsection*{Thick-thin Decomposition.}
We define the thick-thin decomposition of $\Sigma$ as follows. Let $\double\Sigma$ be the double of $\Sigma$ along its totally geodesic boundary.
{We define $(\double\Sigma)_{\thin}$ to be the subset of $\double\Sigma$ consisting of}
\begin{itemize}
    \item[(i)] 
the union of collars $\collar(\gamma)$ for all simple closed geodesic of length $\le 2\arsinh(1)$:
\[\collar(\gamma)=\left\{p\in\double\Sigma\,|\, \dist(p,\gamma)\le \tube(\gamma)\right\},\quad \text{where}\quad \tube(\gamma)=\arsinh\left(\frac{1}{\sinh(|\gamma|/2)}\right)\]
 and $|\gamma|$ denotes the length of $\gamma$. The collar $\collar(\gamma)$ is isometric to the warped product $[-\tube(\gamma),\tube(\gamma)]\times_{\cosh}\mathbb{S}^1_{R}$, where $2\pi R=|\gamma|$. {Recall that the warped product $[-\tube(\gamma),\tube(\gamma)]\times_{\cosh}\mathbb{S}^1_{R}$ is the Riemannian surface $[-\tube(\gamma),\tube(\gamma)]\times\mathbb{S}^1_{R}$ with Riemannian metric $\dd t^2+\cosh^2( t) g_{\mathbb{S}^1_{R}}$ where $\dd t^2$ and $g_{\mathbb{S}^1_{R}}$ denote the canonical metrics of $[-\tube(\gamma),\tube(\gamma)]$ and $\mathbb{S}^1_{R}$}; 
\item[(ii)] a finite collection of cusps $\cusp$ isometric to {the warped product} $(-\infty, \log 2)\times_f\mathbb{S}^1$, where $f(t)=e^{t}$.
\end{itemize} 

According to the Collar Theorem \cite[Theorem 4.4.6]{Bus92}, the collars and cusps are mutually disjoint and the injectivity radius of any point in the complement of $(\double\Sigma)_{\thin}$ is strictly bigger than $\arsinh(1)$. 
{By defining 
\[(\double\Sigma)_\thick:=\{p\in \double\Sigma\,|\, \inj_p(\double\Sigma)>\arsinh(1)\},\]
{we have a covering of $\double \Sigma$ by $(\double\Sigma)_{\thick}$ and $(\double\Sigma)_{\thin}$} which we call the \textit{thick-thin decomposition}.} 
Note that $ (\double\Sigma)_{\thick}$ is always nonempty, and {if $(\double\Sigma)_{\thin}\neq\emptyset$}, the intersection of the thick and thin parts is nonempty. {Indeed, for any point on $p\in \partial\collar(\gamma)\subset (\double\Sigma)_{\thin}$, by \cite[Theorem 4.1.6]{Bus92}), we have $\inj_p(\double\Sigma)>\arsinh(1)$. }

{We can view $\Sigma$ as a subdomain of $\double\Sigma$ and define its thick-thin decomposition by considering the intersection of $\Sigma$ with the thick and thin parts of $\double \Sigma$. {However, we {shall need to} consider an alternative definition described below.}}

{We observe that} the intersection between $\Sigma$ and a collar $\collar(\gamma){\subset(\double\Sigma)_{\thin}}$ 
is the collar itself if $\gamma\subset \Sigma$; it is a half-collar if $\gamma$ is one of the boundary components. But if there is at least one geodesic boundary of $\Sigma$ of length $>2\arsinh(1)$, we may have that $\gamma\cap \Sigma$ is a geodesic arc with endpoints on one or two boundary components (as it happens in Figure \ref{fig:decomposition} where a geodesic of the decomposition intersect the geodesic boundary).
For technical reasons, we want to avoid this situation. Hence, {to define the thick-thin decomposition for $\Sigma$, we first modify} the definition of the thick-thin decomposition of {$\double\Sigma$}.

Let $\{B_1,\cdots, B_b\}$ be the boundary components of $\partial \Sigma$ and $\beta=|B_{\max}|=\max_i|B_i|$. We take 
\begin{equation}\label{epsilon}
\eps=\eps(\beta)=\min\left\{{\arsinh(1)}, \tube(B_{\max})\right\},\end{equation} and define {the $\eps$-\textit{thick-thin decomposition}} of $\double\Sigma$ as follows.
$$(\double\Sigma)_{\thick}^\eps=\{p\in \double \Sigma\,|\, \inj_p(\double\Sigma)>{\eps}\},\qquad (\double\Sigma)_{\thin}^\eps=\bigcup_{|\gamma|\le 2\eps}\collar(\gamma)\bigcup \left(\bigcup_j \cusp_j\right).$$
The union of the $\eps$-thick and $\eps$-thin parts cover the whole $\double\Sigma$ because if $p\in \double\Sigma \setminus \double\Sigma_{\thin}^\eps$, either $p\in\collar(\gamma)$ for a $\gamma$ with $2\arsinh(1)\geq|\gamma|\geq2\eps$ which implies $\inj_p(\double\Sigma)\geq\eps$, or $p \in \double\Sigma_{\thick}$ and $\inj_p(\double\Sigma)>\arsinh(1)\geq \eps$. 
{We now define the $\eps$-thick-thin decomposition of $\Sigma$ as follows. }
{$$ \Sigma_{\thick}^\eps:=(\Sigma\cap(\double\Sigma)_{\thick}^\eps)\setminus (\cup_j\collar^+_j)^{\circ},\qquad \Sigma_{\thin}^\eps:=((\double\Sigma)_{\thin}^\eps\cap \Sigma)\cup (\cup_j\collar^+_j)),$$
where $\collar_j^+$ denote the half-collar around the geodesic boundary $B_j$ and $(\collar_j^+)^{\circ}$ its interior. We note that for any point $p \in\Sigma_{\thick}^\eps$, $\inj_p(\Sigma)\geq\eps$ because $\inj_p(\double \Sigma)>\eps$ and $\dist(p,\partial \Sigma)\geq\eps$. We also note that $\Sigma_{\thin}^\eps$ is a disjoint union of collars, half-collars, and cusps.
}

\begin{figure}[h]
    \centering
    \def\svgwidth{0.6\textwidth}
    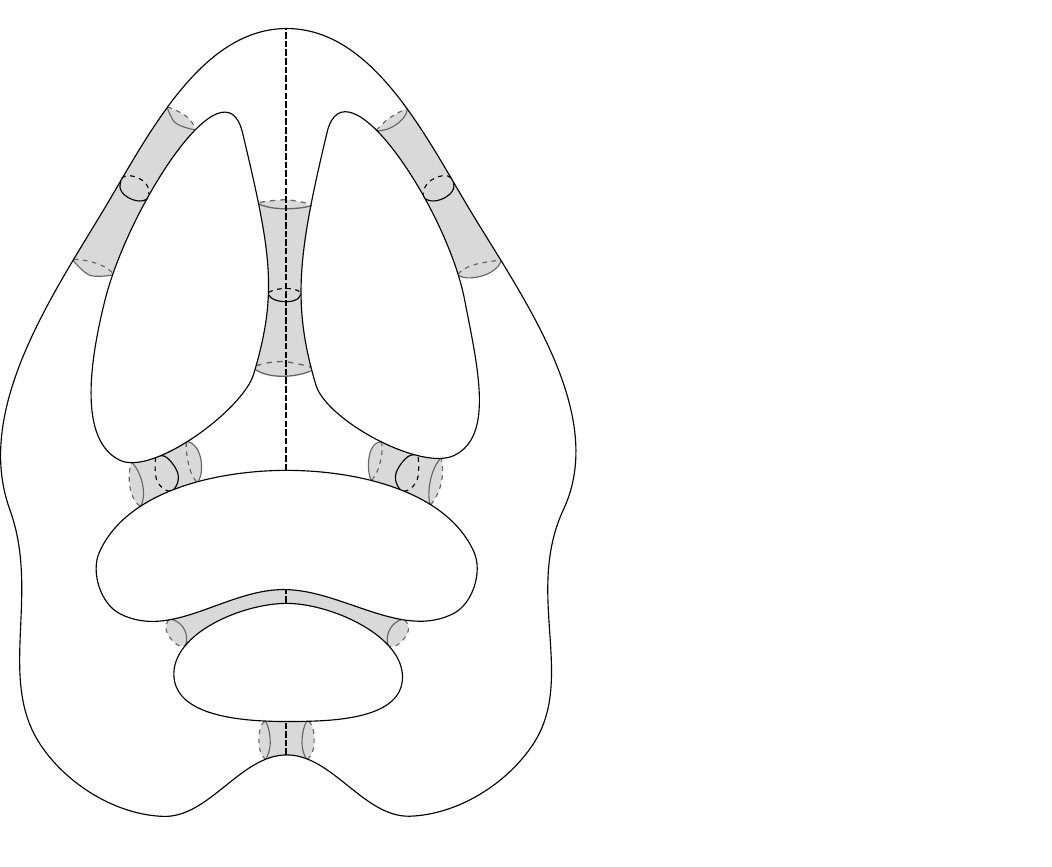
    \caption{On the left, the grey parts show  $(\double\Sigma)_{\thin}$ where $\Sigma$ is a hyperbolic surface with 3 boundary components $B_1, B_2$ and $B_3$, and on the right, the thin part $\Sigma_{\thin}^{\eps}$ of $\Sigma$. Note that since $\eps \leq \arsinh(1)$, some of the original thin tubes are no longer in the thin part. Furthermore, by definition, the half-collar of each boundary component is part of $\Sigma_{\thin}^{\eps}$.} 
    \label{fig:decomposition}
\end{figure}

We shall see that a key ingredient of the proof is the behaviour of the Steklov eigenfunctions on {the half-collars,}  the thick part and the thin collars, while the presence of the cusps will not be important. We list two key lemmas due to Dodziuk and Randol \cite{DR86,Dod} which provide estimates on the Dirichlet energy of the Steklov eigenfunction on the thick part and on thin collars. 
\begin{lemma}[\cite{DR86}] \label{lem:tube-energy}
    Let $\collar(\gamma)\subset (\double \Sigma)_{\thin}$ and $\Gamma_1$ and $\Gamma_2$ be the two boundary components of $\collar(\gamma)$.  Let $f$ be a differentiable function on $\collar(\gamma)$ and assume that there exists a positive constant $c>0$ such that 
    \begin{align*}
        \min_{x \in \Gamma_1} |f(x) - f(x^*)| \geq c, 
    \end{align*}
    where $x^* \in \Gamma_2$ is the  reflection of $x \in\Gamma_1$ with respect to $\gamma$.
    Then
    \begin{align*}
        \int_{\collar(\gamma)}|\nabla f|^2  \geq \frac{c^2 |\gamma|}{4}.
    \end{align*}
\end{lemma}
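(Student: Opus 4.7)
The plan is a one-dimensional slicing argument in Fermi coordinates transverse to $\gamma$. Since $\collar(\gamma)$ is isometric to $[-\tube(\gamma),\tube(\gamma)]\times_{\cosh}\mathbb{S}^1_R$, I would parametrize it by $(t,s)$ where $t$ is the signed distance from $\gamma$ and $s$ is the arc-length parameter along $\gamma$, with $s\in[0,|\gamma|]$. In these coordinates the metric reads $dt^2+\cosh^2(t)\,ds^2$, the area element is $\cosh(t)\,dt\,ds$, and $|\nabla f|^2=(\partial_t f)^2+\cosh^{-2}(t)(\partial_s f)^2$. The reflection across $\gamma$ is the map $(t,s)\mapsto(-t,s)$, so the hypothesis translates to $|f(\tube(\gamma),s)-f(-\tube(\gamma),s)|\geq c$ uniformly in $s\in[0,|\gamma|]$.

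The first step is to discard the non-negative angular contribution, reducing the problem to the slice-wise inequality $\int_{\collar(\gamma)}|\nabla f|^2 \geq \int_0^{|\gamma|}\int_{-\tube(\gamma)}^{\tube(\gamma)}(\partial_t f)^2\cosh(t)\,dt\,ds$. For each fixed $s$, I would apply Cauchy--Schwarz with the hyperbolic weight by factorizing $\partial_t f=[(\partial_t f)\cosh^{1/2}(t)]\cdot[\cosh^{-1/2}(t)]$ and combining with the fundamental theorem of calculus to obtain
\[
c^2\leq\left(\int_{-\tube(\gamma)}^{\tube(\gamma)}\frac{dt}{\cosh(t)}\right)\left(\int_{-\tube(\gamma)}^{\tube(\gamma)}(\partial_t f)^2\cosh(t)\,dt\right).
\]
Since $\int_{-\infty}^{\infty}dt/\cosh(t)=\pi<4$, the first factor is bounded by $4$ uniformly in $\tube(\gamma)$. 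Rearranging and integrating over $s\in[0,|\gamma|]$ delivers the stated bound $c^2|\gamma|/4$.

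The only real subtlety is precisely the choice of weight: a naive unweighted Cauchy--Schwarz would produce a factor of $2\tube(\gamma)$ in the denominator, yielding a bound that degenerates as $|\gamma|\to 0$ (when $\tube(\gamma)\to\infty$). The hyperbolic weighting by $\cosh(t)$ both matches the area element and trades the unbounded width for the uniformly integrable function $1/\cosh$, leaving behind a lower bound that depends only on $|\gamma|$ and $c$. Beyond noticing this weighting, there is no real conceptual obstacle.
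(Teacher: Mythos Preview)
Your proof is correct. The paper does not actually prove this lemma---it is quoted from \cite{DR86} and stated without proof---so there is no ``paper's own proof'' to compare against. That said, your weighted Cauchy--Schwarz argument is exactly the standard one, and the very same computation (dropping the angular term, splitting $\partial_t f = (\partial_t f\cdot\cosh^{1/2}t)\cdot\cosh^{-1/2}t$, and using $\int_{-\infty}^\infty dt/\cosh t=\pi$) appears verbatim in the paper a few pages later, in the proof of Proposition~\ref{lem:ell_1} around inequality~\eqref{eq2:l2normfi}, applied there to a half-collar rather than a full collar. So your argument is not merely correct but is precisely the technique the authors rely on elsewhere.
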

The second key lemma shows that the oscillation of a Steklov eigenfunction on the thick part is bounded above by the corresponding Steklov eigenvalue.  
\begin{lemma}\label{lem:sobinq}
If $\varphi$ is a $\sigma$-Steklov eigenfunction with $\|\varphi\|_{L^2(\partial \Sigma)}=1$, then
for any $x,y$ that belongs to a single connected component $\Sigma_0$ of ${\Sigma^{\eps}_{\thick}}$,  we have 
\begin{equation}\label{cor:DR}
    |\varphi(x)-\varphi(y)|\le c(\beta) \sqrt{\sigma |\Sigma_0|},
\end{equation}
where $c(\beta)=c\,\eps^{-1}$ for some positive universal constant $c$. Note that $c(\beta)$ depends on $\beta=|B_{\max}|$ only when $\beta\geq 2\arsinh(1)$, {otherwise it is independent of $\beta$.}
\end{lemma}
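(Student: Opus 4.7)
The plan is a chaining argument of Dodziuk--Randol type adapted to the Steklov setting: cover $\Sigma_0$ by a bounded-overlap family of balls of uniform radius, bound the oscillation of the harmonic function $\varphi$ on each such ball by the local Dirichlet energy, and then telescope along a chain of consecutively overlapping balls from $x$ to $y$. The final step will use that the global Dirichlet energy equals the eigenvalue $\sigma$ by the Steklov equation and the normalization $\|\varphi\|_{L^2(\partial\Sigma)}=1$.

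For the covering, note that for every $p\in\Sigma_0\subset \Sigma^\eps_{\thick}$ one has $\inj_p(\double\Sigma)>\eps$ and $\dist(p,\partial\Sigma)\ge\eps$, so $B(p,\eps)\subset\Sigma$ is isometric to a geodesic $\eps$-ball of $\mathbb{H}^2$. I would pick a maximal $(\eps/4)$-separated family $\{p_1,\dots,p_N\}\subset\Sigma_0$; then $\{B(p_i,\eps/2)\}$ covers $\Sigma_0$, the enlarged cover $\{B(p_i,\eps)\}$ has universally bounded multiplicity (since $\eps\le\arsinh(1)$), and volume comparison in $\mathbb{H}^2$ gives $N\le C|\Sigma_0|/\eps^2$. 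On each ball, letting $\bar\varphi_i$ denote the mean of $\varphi$ over $B(p_i,\eps)$, I would combine the Poincar\'e inequality
$$\|\varphi-\bar\varphi_i\|_{L^2(B(p_i,\eps))}\le C\eps\,\|\nabla\varphi\|_{L^2(B(p_i,\eps))}$$
with Moser's $L^\infty$--$L^2$ estimate for the harmonic function $\varphi-\bar\varphi_i$,
$$\|\varphi-\bar\varphi_i\|_{L^\infty(B(p_i,\eps/2))}\le C\eps^{-1}\|\varphi-\bar\varphi_i\|_{L^2(B(p_i,\eps))}.$$
In dimension two the two powers of $\eps$ cancel, producing the universal per-ball oscillation bound $\mathrm{osc}_{B(p_i,\eps/2)}\varphi\le C\,\|\nabla\varphi\|_{L^2(B(p_i,\eps))}$.

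To conclude, fix $x,y\in\Sigma_0$. Because $\Sigma_0$ is connected, the intersection graph of the cover is connected, so $x$ and $y$ are joined by a chain $B(p_{i_1},\eps/2),\dots,B(p_{i_M},\eps/2)$ of at most $M\le N$ consecutively overlapping balls. Telescoping, Cauchy--Schwarz, and the bounded multiplicity of the enlarged cover give
$$|\varphi(x)-\varphi(y)|\le\sum_{k=1}^M \mathrm{osc}_{B(p_{i_k},\eps/2)}\varphi\le C\sqrt{M}\,\|\nabla\varphi\|_{L^2(\Sigma)}\le C\sqrt{N\sigma},$$
where the final inequality uses $\|\nabla\varphi\|_{L^2(\Sigma)}^2=\sigma\|\varphi\|_{L^2(\partial\Sigma)}^2=\sigma$. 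Inserting $N\le C|\Sigma_0|/\eps^2$ yields $|\varphi(x)-\varphi(y)|\le (C/\eps)\sqrt{\sigma|\Sigma_0|}$, i.e.\ the claim with $c(\beta)=c/\eps$.

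The main obstacle I anticipate is the per-ball oscillation bound: one must verify that the Poincar\'e and Moser constants on hyperbolic balls of radius $\eps$ are genuinely universal. This is standard as long as $\eps$ stays below a fixed threshold, which is exactly the role of the cap $\eps\le\arsinh(1)$ in the definition of $\eps$; the stated $\beta$-dependence of $c(\beta)$ is then simply a record of how $\eps=\min\{\arsinh(1),\tube(B_{\max})\}$ shrinks as $|B_{\max}|$ grows past $2\arsinh(1)$.
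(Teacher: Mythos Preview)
Your proof is correct and follows essentially the same Dodziuk--Randol chaining strategy as the paper: a cover of $\Sigma_0$ by balls of radius comparable to $\eps$, a per-ball oscillation-by-energy estimate, then Cauchy--Schwarz over a chain of $N\le C|\Sigma_0|/\eps^{2}$ overlapping balls combined with $\int_\Sigma|\nabla\varphi|^2=\sigma$. The only cosmetic difference is the per-ball input: the paper uses the harmonic gradient estimate $\|\nabla\varphi\|_{L^\infty(\mathcal B/2)}\le c\,r^{-1}\|\nabla\varphi\|_{L^2(\mathcal B)}$ and integrates along a piecewise geodesic, whereas you obtain the equivalent oscillation bound via Poincar\'e plus Moser.
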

\begin{proof} {This result is a consequence of the fact that} 
there exists a positive universal constant $c_1$ such that for any harmonic function $\varphi$  on $\Sigma$ 
 and for any ball $\ball$ {centered at a point $x\in\Sigma^{\eps}_{\thick}$ of radius $0<r\le\eps$}
 we have

\begin{equation}\label{lem:DR}
\|\nabla\varphi\|_{\infty,\ball/2}\le  c_1r^{-1}\left(\int_\ball|\nabla \varphi|^2\dd A\right)^{\frac{1}{2}},
\end{equation}
where  $\ball/2$ is the ball concentric with $\ball$ and of half the radius of $\ball$.
We refer to \cite{DR86} and \cite[page 32]{Dod} for the proof of inequality \eqref{lem:DR}. See also \cite[Section 4]{BBHM}. The proof of its consequence, {inequality} \eqref{cor:DR}, can be also found in \cite{DR86,BBHM} but for the convenience of the reader, we add the details of the proof here.

Let $\{\ball_j\}_{j=1}^N$, $N\in \mathbb{N}$ be a chain of overlapping balls centered at $x_j\in \Sigma_0 \subset  \Sigma^\eps_{\thick}$, and of radius $r_j=\eps/2$, connecting $x$ and $y$ such that $\{\ball_j/2\}_{j=1}^N$ are mutually disjoint.  Then $N$ can be bounded above by $c_2\eps^{-2}|\Sigma_0|$. We can also assume that each ball $2\ball_j$  intersects at most $c_3$ balls, where {$c_2$ and $c_3$ are}  universal constants. Then
 \begin{eqnarray*}
   \sum_j  \|\nabla \varphi\|_{\infty, \ball_j }&\le & c_1\eps^{-1}\sum_{j=1}^N\left(\int_{2\ball_j}|\nabla\varphi|^2\right)^{1/2} \\
   &\le& c_1\eps^{-1} \sqrt{N}\left(\sum_{j=1}^N\int_{2\ball_j}|\nabla\varphi|^2\right)^{1/2}\\
   &\le& c_4\eps^{-2}  \sqrt{|\Sigma_0|}\left(\int_{\Sigma}|\nabla\varphi|^2\right)^{1/2}\\
   &=&c_4\eps^{-2}  \sqrt{|\Sigma_0|\sigma},
  \end{eqnarray*}
  where $c_4$ is a universal constant depending on $c_1,c_2$, and $c_3$.\\
Now, let  $\curve:[0,1]\to \Sigma_0$ be a piece-wise geodesic  curve connecting $x$ and $y$. We choose the partition $0=t_0<t_1< \cdots<t_m=1$ such that $\curve|_{[t_j,t_{j+1}]}$ is a geodesic and $\curve([t_j,t_{j+1}])\subset \ball_{j}$. Hence, the length of $\curve|_{[t_j,t_{j+1}]}$ is bounded above by $2\eps$. We conclude that
   \begin{eqnarray}\label{oscilation}
      \nonumber |\varphi(x)-\varphi(y)|&=&\left|\int_0^1\frac{d}{dt}\varphi\circ \curve(t)dt\right|\\
       \nonumber&\le& \sum_{j}\|\nabla \varphi\|_{\infty, B_j }\int_{t_j}^{t_{j+1}}|\curve'(t)|dt\\
      \nonumber &\le&2\eps\sum_{j}\|\nabla\varphi\|_{\infty, B_{j} }\\
      &\le& c \eps^{-1}\sqrt{|\Sigma_0|}\sigma,
   \end{eqnarray}
   where $c$ is a universal constant.
   \end{proof}

\section{Proof of the main result} 
Let us recall the definition of  $\ellSt_k := \inf_{\curve \in \chain_k} |\curve|,$
    where $\chain_k$ denotes the set of multi-geodesics formed by disjoint simple closed geodesics, dividing $\Sigma$ into $k+1$ connected components, each containing at least one part of $\partial \Sigma$. When $\chain_k=\emptyset$, we set $\ell_k=\infty$.
    The following lemma shows when   $\ellSt_k$ will be achieved.\\

\begin{lemma} \label{lem:C_k}
    Let $\Sigma$ be a hyperbolic surface of signature (g,b;p). Then
    \begin{itemize}
    \item[(i)] $\chain_b=\emptyset$ for any signature. In particular, $\chain_1=\emptyset$ when $b=1$.
    \item[(ii)] $\chain_1 = \emptyset$  when $(g,b;p)=(0,3;0)$ or $(0,2;1)$.
        \item[(iii)]
    {$\chain_{b-1}\neq \emptyset$ when $(g \geq 1\text{~or~} p\ge 2)$ and $b \geq 2$.}

        \item[(iv)] $\chain_{b-2} \neq\emptyset$ and $\chain_{b-1} = \emptyset$ for any surface with signature $(0,b;1)$, $b \geq 3$.
        \item[(v)] $\chain_{b - 3}\neq\emptyset$  and $\chain_{b-2} = \emptyset$ for any surface with signature $(0,b;0)$, $b \geq 4$.
    \end{itemize}
\end{lemma}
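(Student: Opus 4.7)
The plan is to split the five assertions by technique: parts (i), (ii), and the "empty" halves of (iv), (v) are non-existence results following from an Euler-characteristic count, while parts (iii), (iv), (v) each have an existence half that requires an explicit construction. Throughout I use the following setup: suppose $\curve \in \chain_k$ consists of $n$ disjoint simple essential closed geodesics cutting $\Sigma$ into $c = k+1$ pieces $\Sigma_i$ of signature $(g_i, b_i, p_i)$. Since each curve of $\curve$ is interior and essential, each piece $\Sigma_i$ is a finite-area hyperbolic surface with geodesic boundary, and hence $\chi(\Sigma_i) \leq -1$. Additivity of Euler characteristic under gluing along circles, together with standard boundary/cusp/genus bookkeeping, yields
\[
\chi(\Sigma) = \sum_i \chi(\Sigma_i), \quad \sum_i b_i = b + 2n, \quad \sum_i p_i = p, \quad \sum_i g_i = g - n + c - 1.
\]

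For the non-existence claims, the common ingredient is $\chi(\Sigma) = \sum_i \chi(\Sigma_i) \leq -c$. Part (i) is pure pigeonhole: $b+1$ components cannot each contain one of only $b$ boundaries, and for $b = 1$ even $c = 2$ is already too many. For (ii), on $(0, 3; 0)$ or $(0, 2; 1)$ we have $\chi(\Sigma) = -1$, so $\chain_1$ (where $c = 2$) would force $-1 \leq -2$. For the non-existence halves of (iv) and (v) the count becomes $\chi(\Sigma) = 1 - b \leq -b$ and $\chi(\Sigma) = 2 - b \leq -(b-1)$ respectively, both absurd.

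For the existence claims (iii), (iv), (v) I would build the multi-geodesic by first choosing a pants decomposition $\mathcal{P}$ of $\Sigma$ adapted to the target distribution of boundaries---in (iii) one original boundary per pants; in (iv) the unique cusp sharing a pants with one boundary; in (v) two chosen boundaries sharing a pants---and then cutting along an appropriate subset of its decomposition curves. The hypothesis in each case makes the pants count $|\mathcal{P}| = 2g - 2 + b + p$ large enough to accommodate such a distribution. With $\mathcal{P}$ fixed, I form its dual graph $G$ (vertices $=$ pants, edges $=$ decomposition curves), declare a vertex "marked" if the corresponding pants contains a boundary-cluster ($m = b, b-1, b-2$ marked vertices in cases (iii), (iv), (v)), take a spanning tree $T$ of $G$, and apply the combinatorial fact that any tree with $m$ marked vertices admits a subset of $m-1$ edges whose removal yields $m$ subtrees with one marked vertex each (quick induction: remove any edge on the path between two marked vertices and recurse). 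The multi-geodesic is then the geodesic realizations of the selected $m-1$ tree-edges together with all non-tree edges of $G$; it cuts $\Sigma$ into $m$ components, each containing the intended boundary-cluster, giving an element of $\chain_{b-1}$, $\chain_{b-2}$, or $\chain_{b-3}$ respectively. The delicate step---and the main technical obstacle---is verifying the existence of the adapted pants decomposition. I would establish this by induction on $|\chi(\Sigma)|$, checking the base cases $(1, 2; 0)$, $(0, 2; 2)$, $(0, 3; 1)$, $(0, 4; 0)$ by direct inspection, and performing the inductive step by peeling off a pair of pants at a chosen boundary (or cusp) in a manner that preserves the separation condition; the tight match between the Euler count and the hypothesized signatures shows this can always be arranged.
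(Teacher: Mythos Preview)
Your proof is correct, and in fact your Euler-characteristic argument for the non-existence claims (parts (ii) and the empty halves of (iv), (v)) is more rigorous than the paper's, which for (iv) and (v) simply says ``a similar reasoning shows that $\chain_{b-2} = \emptyset$'' without spelling out the obstruction.

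Where you diverge from the paper is in the existence constructions. For (iii) the paper draws an explicit multi-geodesic (encircling boundary components, using the handle or the pair of cusps to absorb the last piece), and then for (iv) and (v) it applies a doubling trick: doubling $\Sigma$ along one boundary (case (iv)) or two boundaries (case (v)) produces a surface satisfying the hypotheses of (iii), one applies (iii) there, and restricts the resulting multi-geodesic back to $\Sigma$. Your route---pants decomposition, dual graph, spanning tree, and the tree-separation lemma---is more combinatorial and uniform across the three cases. The paper's doubling argument is slicker and avoids the ``adapted pants decomposition'' verification you flag as delicate; on the other hand, your framework makes the structure transparent (for genus zero the dual graph is already a tree, so the linear pants decomposition gives everything immediately) and would generalize more readily to other separation problems. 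Both approaches are valid; the adapted-decomposition step you defer to induction is routine once one observes that a path-shaped dual tree always exists for the planar cases (iv), (v), and that for (iii) the excess $2g - 2 + p \geq 0$ in the pants count allows the boundaries to be spread one-per-pants.
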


\begin{proof} 
\begin{itemize}
    \item[$(i)$] It is clear that $\chain_b = \emptyset$ as that would require finding a decomposition of $\Sigma$ into $b+1$ components each containing part of the $b$ components of the boundary. 
    \item[$(ii)$] In this case, the surface is either a pair of pants or a surface with two boundary components and one cusp. In either case, one can see that $\chain_1 = \emptyset$ as there are no geodesic loops (non-homotopic to the boundary components). 
    \item[$(iii)$] To show that $\chain_{b-1}$ is non-empty, we proceed as shown in Figure \ref{fig:cut-genus}. Note that the curves are not in the same free homotopy class and can be viewed as geodesics. Indeed, if two nontrivial closed curves are disjoint, then the closed geodesics in their respective free homotopy class either coincide as point sets or remain disjoint (see e.g. \cite[Chapter 1]{Bus92}).  

\begin{figure}[h]
    \centering
    \def\svgwidth{0.3\textwidth}
    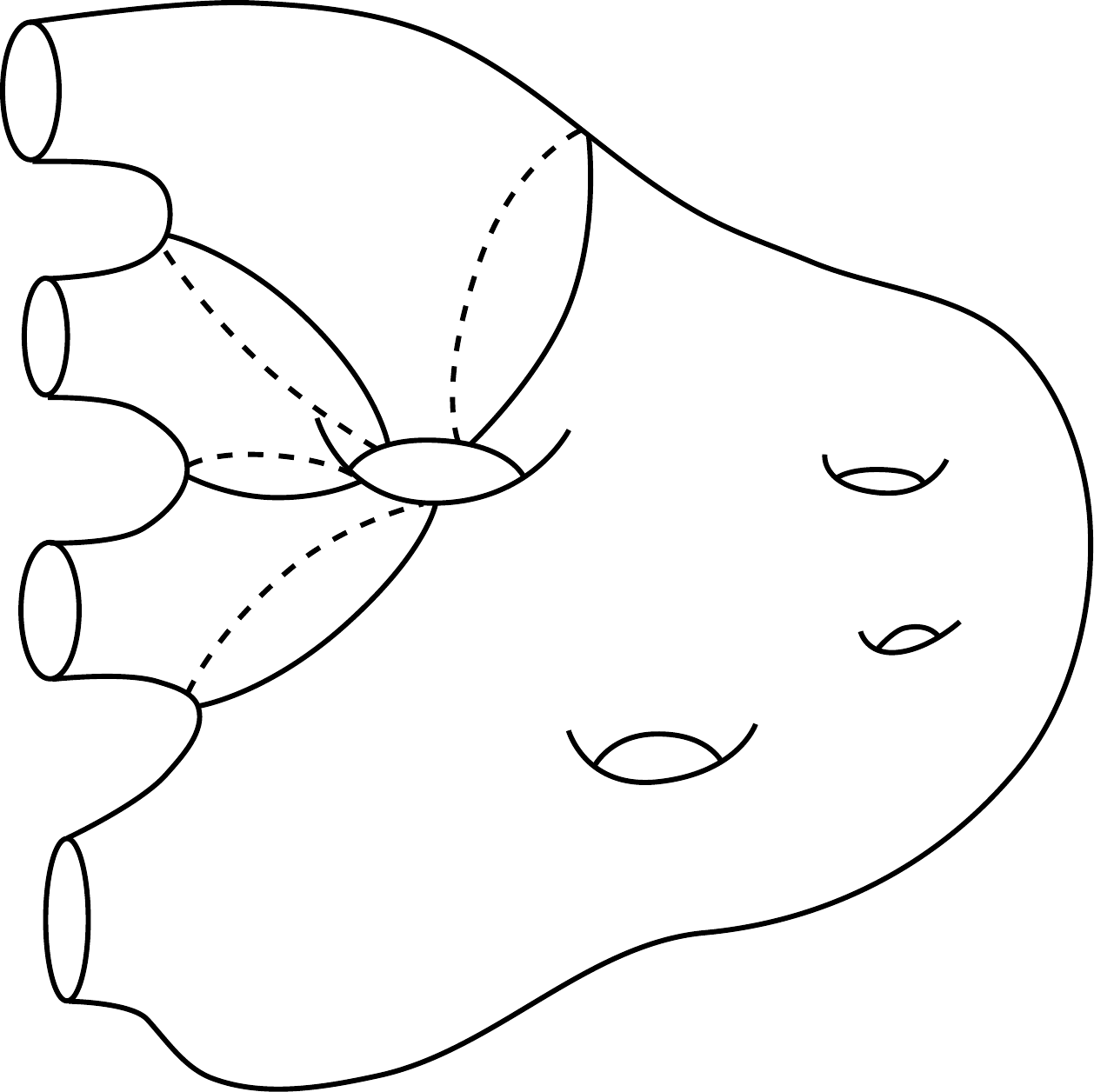
    \def\svgwidth{0.3\textwidth}
    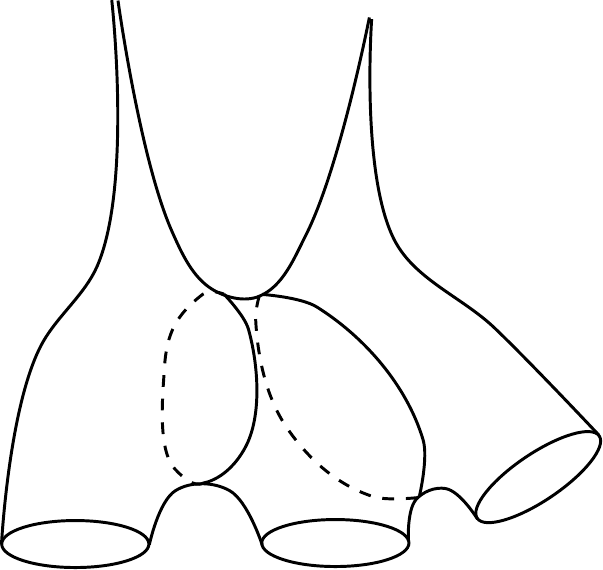
    \caption{Examples of decomposition when the genus is non-zero or there are at least two cusps.}
    \label{fig:cut-genus}
\end{figure}

\item[$(iv)$ \& $(v)$] The cases $(iv)$ and $(v)$ are similar in their proof, we first prove case $(v)$. Let $B_1$ and $B_2$ be two boundary components  and $\double_{B_1,B_2}\Sigma$ be a surface obtained by the doubling of $\Sigma$ along these two boundary components (as illustrated in Figure \ref{fig:cut-nogenus}). Then $\double_{B_1,B_2}\Sigma$ is a surface of signature $(1,2(b-2);0)$. Applying the proof of part $(iii)$, we can obtain a multi-geodesic $\chain$ decomposing $\double_{B_1, B_2}\Sigma$ into $2b - 4$ components and such that $B_1$ and $B_2$ are part of $\chain$. This chain when restricted to $\Sigma$ decomposes $\Sigma$ into $b - 2$ components, each having part of the boundary $\del \Sigma$. Hence $\chain|_\Sigma \in \chain_{b-3} \neq \emptyset$. A similar reasoning shows that $\chain_{b-2} = \emptyset$. For the case $(iv)$, the proof is similar but we double along a single boundary component to obtain a surface of signature $(0, 2(b-1); 2)$. 
\end{itemize}
\end{proof}

\begin{figure}[h]
    \centering
    \def\svgwidth{0.6\textwidth}
    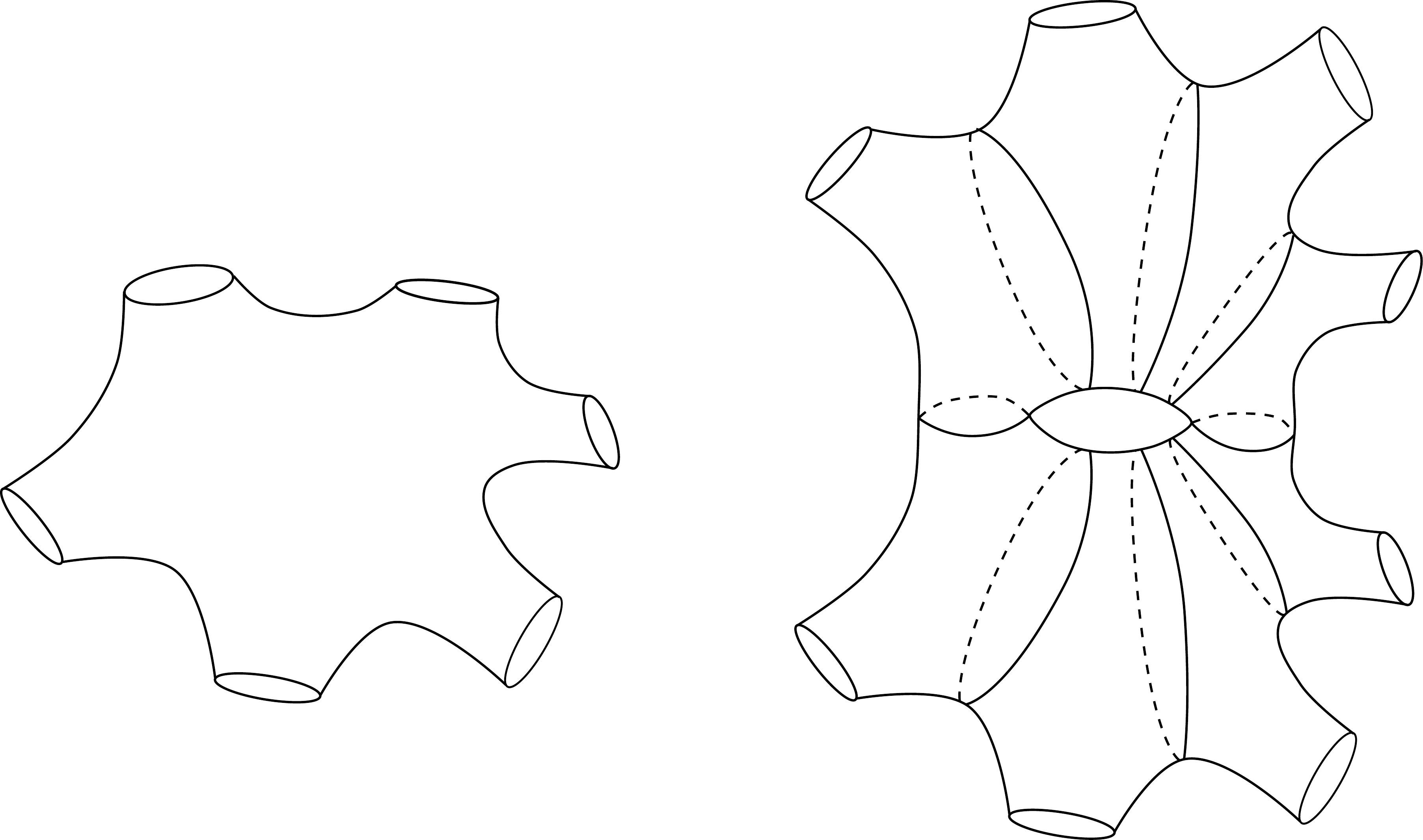
    \caption{A surface with 6 boundary components and genus 0 and how to obtain 4 disjoint components each containing part of the boundary by considering its double with respect to two of the boundary components.}
    \label{fig:cut-nogenus}
\end{figure}

\begin{remark}\label{rem:ellk bound}
When $\beta < 2 \arsinh(1)$ and $k < \min \left\{\lceil \frac{b}{2} \rceil, K+1\right\}$, $\ell_k$ cannot be arbitrarily large. Indeed, as shown in \cite{Per22}, it follows from Bers' theorem that in this case, $\ell_k$ is bounded in terms of an explicit constant depending only on the genus, number of cusps and number of boundary components. Note that even though the result given in \cite{Per22} is for compact surfaces, one can easily extend it to allow for cusps by using an appropriate generalisation of Bers' theorem (see e.g. \cite{Bus92} or \cite[Theorem 6.10 and its proof]{BPS}).
On the other hand, for $k \geq \min\{\lceil \frac{b}{2} \rceil, K+1\}$, one can construct surfaces making $\ell_k$ arbitrarily large. For example,  Figure \ref{fig:large_ell_k} illustrates that one can make $\ell_3$ arbitrarily large while keeping the length of the boundary components constant.
This behaviour contrasts with the one observed for the equivalent $\ell_k$ used in the Laplacian eigenvalue problem. In this case, it is always bounded from above by some constant depending on the genus and number of cusps, a consequence of the bound on lengths of pants decomposition \cite{Bus92, BPS}.
\end{remark}

\begin{figure}[h]
    \centering
    \def\svgwidth{0.4\textwidth}
    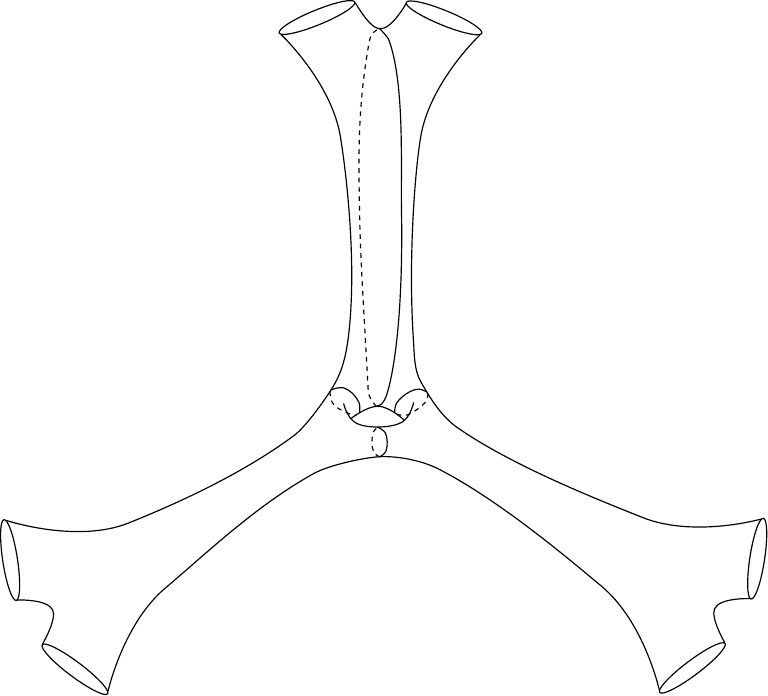
    \caption{Example of surface {with $6$ boundary components and} a large $\ell_3$.}
    \label{fig:large_ell_k}
\end{figure}

\begin{proposition} \label{lem:ell_1}
    Let $\Sigma$ be a hyperbolic surface of signature $(g,b;p)$. Let $\beta$ denote the maximum length of the boundary components and $\chi$ the Euler number of $\Sigma$. Then there exists a universal positive constant $c$  such that 
   \[\sigma_1(\Sigma) \geq \frac{c}{b\chi^2} \min\left\{\frac{1}{(1+\beta)^2e^\beta},\frac{\ellSt_1}{\beta}\right\}.\]
\end{proposition}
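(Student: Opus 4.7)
The plan is to adapt the Dodziuk-Randol strategy \cite{DR86} to the Steklov setting, using the $\eps$-thick-thin decomposition built in the preliminaries. Let $\varphi$ be a $\sigma_1$-eigenfunction, normalised by $\int_{\partial\Sigma}\varphi=0$ and $\|\varphi\|_{L^2(\partial\Sigma)}=1$, so that $\int_\Sigma|\nabla\varphi|^2=\sigma_1$. Let $\{\Sigma_\alpha\}_{\alpha=1}^N$ be the connected components of $\Sigma^\eps_{\thick}$; a standard topological count using the structure of $\Sigma^\eps_{\thin}$ gives $N\lesssim|\chi|$. On each $\Sigma_\alpha$, Lemma \ref{lem:sobinq} combined with the Gauss-Bonnet bound $|\Sigma|\le 2\pi|\chi|$ controls the oscillation of $\varphi$ by $c\,\eps^{-1}\sqrt{|\chi|\sigma_1}$, and I pick a representative value $c_\alpha$ of $\varphi$ on $\Sigma_\alpha$ (say, the value on a fixed circle of $\partial\Sigma_\alpha$).

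Next I propagate $c_\alpha$ across each half-collar $\collar^+_j$ around a boundary component $B_j$. Using harmonicity of $\varphi$, the smallness of the Steklov flux $\partial_\nu\varphi=\sigma_1\varphi$ on $B_j$, and the explicit warped-product geometry of $\collar^+_j$, I expect to establish an estimate of the form
\[
|\varphi(x) - c_{\alpha(j)}| \;\le\; c\sqrt{\sigma_1}\,(1+\beta)\,e^{\beta/2}, \qquad x\in B_j,
\]
where $\alpha(j)$ is the thick component attached to $\collar^+_j$. If $\sigma_1$ is large enough that this error plus the thick-component oscillation already exceeds a fixed fraction of $1/\sqrt{b\beta}$, then the first term of the minimum is forced and the inequality $\sigma_1\ge C/\bigl(b\chi^2(1+\beta)^2e^\beta\bigr)$ follows immediately. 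In the complementary regime, $\varphi\approx c_{\alpha(j)}$ on $B_j$ to arbitrarily good precision relative to $1/\sqrt{b\beta}$.

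In this small-$\sigma_1$ regime, the identities $\int_{\partial\Sigma}\varphi=0$, $\|\varphi\|^2_{L^2(\partial\Sigma)}=1$, combined with $|\partial\Sigma|\le b\beta$ and $N\lesssim|\chi|$, force the $c_\alpha$'s to take both signs and, by the pigeonhole principle applied to consecutive gaps in the sorted sequence $c_{(1)}\le\cdots\le c_{(N)}$, to exhibit a gap of size at least $c/(|\chi|\sqrt{b\beta})$. Choosing a threshold $t$ inside the largest gap bipartitions the thick components into positive and negative classes; the half-collar approximation above, together with the mean-zero condition, guarantees that the induced bipartition of the boundary components $B_j$ is non-trivial. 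The collection of interior collars of $\Sigma^\eps_{\thin}$ whose two sides are attached to thick components from opposite classes then forms a multi-geodesic $\curve\in\chain_1$, and hence $|\curve|\ge\ellSt_1$. Applying Lemma \ref{lem:tube-energy} to each such collar with a jump of at least $c/(|\chi|\sqrt{b\beta})$ gives
\[
\sigma_1 \;=\; \int_\Sigma|\nabla\varphi|^2 \;\ge\; \sum_{\gamma\subset\curve}\frac{|\gamma|}{4}\cdot\frac{c^2}{|\chi|^2 b\beta} \;\ge\; \frac{c'\,\ellSt_1}{b\chi^2\beta},
\]
which matches the second term of the minimum.

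The main obstacle will be the half-collar step: when $|B_j|$ is close to $\beta$ the half-collar is very thin radially and long along $B_j$, so propagating the value $c_{\alpha(j)}$ across $\collar^+_j$ to the Steklov boundary $B_j$ without losing more than $(1+\beta)e^{\beta/2}$ requires careful use of the warped-product structure together with a Poincaré-type inequality on the circle fibres and the smallness of the Steklov flux. A secondary, purely topological check is required to ensure that the interior collars selected by the sign-based bipartition of thick components genuinely assemble into an element of $\chain_1$ rather than an arbitrary separating set, but this follows once both sides of the bipartition of boundary components are known to be non-empty.
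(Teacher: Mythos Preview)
Your overall Dodziuk--Randol strategy and your use of Lemmas~\ref{lem:tube-energy} and~\ref{lem:sobinq} match the paper. The substantive divergence, and the real gap, is the half-collar step.

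You propose a \emph{pointwise} estimate
\[
|\varphi(x)-c_{\alpha(j)}|\le c\sqrt{\sigma_1}\,(1+\beta)e^{\beta/2},\qquad x\in B_j,
\]
and you correctly flag it as the main obstacle. It is not clear this holds. Controlling $\int_{\collar_j^+}|\nabla\varphi|^2\le\sigma_1$ gives, via separation of variables on the warped product, only an $H^{1/2}$-type bound on $\varphi|_{B_j}$: writing $\varphi(0,s)=\sum_n a_n e^{2\pi i ns/|B_j|}$, the energy controls $\sum_{n\ne0}|n|\,|a_n|^2$, whereas $L^\infty$ control of $\varphi-\bar\varphi$ on $B_j$ needs $\sum_{n\ne0}|a_n|$. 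The inequality $\sum|a_n|\le(\sum|n||a_n|^2)^{1/2}(\sum 1/|n|)^{1/2}$ fails because the second factor diverges, so a direct bound is not available. Nor does ``smallness of the Steklov flux'' help without a pointwise bound on $\varphi|_{B_j}$, which is exactly what you are trying to prove. A Poincar\'e inequality on circle fibres yields $L^2$ control, not $L^\infty$.

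The paper circumvents this by never seeking pointwise control on $B_j$; it works with \emph{averages} and splits into two cases. Choose $B_I$ with $\int_{B_I}\varphi^2\ge 1/b$. If the average $\bar\varphi_I$ is small compared to $(b|B_I|)^{-1/2}$, then $\varphi-\bar\varphi_I$ is an admissible test function for the first Steklov--Neumann eigenvalue of $\collar_I^+$, and an explicit computation of that eigenvalue gives $\sigma_1\gtrsim 1/(b\beta(1+\beta)e^\beta)$ directly. If instead $\bar\varphi_I\gtrsim(b|B_I|)^{-1/2}$, a one-line Cauchy--Schwarz in the $t$-variable shows $|\bar\varphi_I-\bar\varphi_I(\tube_I)|\le\sqrt{\pi\sigma_1/(2|B_I|)}$; either this alone forces $\sigma_1$ large, or $\bar\varphi_I(\tube_I)$ is still large and the mean-zero condition produces another component $B_J$ with opposite-sign average satisfying the same estimate. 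One then only has to compare values of $\varphi$ at \emph{two} points $p_I\in\partial\collar^+_I$, $p_J\in\partial\collar^+_J$ lying in the thick part, connect them by a single curve, and run your thick/thin dichotomy along that curve. This is also simpler than your pigeonhole over all thick components.

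In short: replace your pointwise half-collar propagation by the paper's two-case argument on averages, and replace the global bipartition of thick components by a single curve between the two distinguished boundary pieces.
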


\noindent{Let us first give an outline of the proof.\\ }

\noindent{\textit{Sketch of the proof.} We consider the behaviour of a normalised $\sigma_1$-eigenfunction $f$ on $\eps$-thick and thin parts.  We first analyse the behaviour of $f$ on the $\epsilon$-thin part adjacent to the boundary where it has the largest $L^2$-norm.
If $f$ is `almost' $L^2$-orthogonal to 1 along that boundary component, then we can modify it and use it as a test function to compare $\sigma_1$ and the first non-zero Steklov-Neumann eigenvalue on that half-collar, obtaining a lower bound for $\sigma_1$ depending only on the length of that boundary component.
Otherwise, since $f$ is $L^2$-orthogonal to 1 on $\partial \Sigma$, it should have a `large' variation somewhere. Either this variation occurs on some $\epsilon$-thin parts adjacent to the boundary, leading to a lower bound for $\sigma_1(\Sigma)$ independent of $\ell_1$, or it occurs away from the boundary. In the latter case, Lemma \ref{lem:sobinq} tells us that the variation on the $\epsilon$-thick part is controlled in terms of $\sigma_1$. Thus the `large' variation must happen in the $\eps$-thin parts in the interior, composed of collars around short geodesics. Lemma \ref{lem:tube-energy} then relates the Dirichlet energy and the length of these short geodesics which ultimately gives the link between $\sigma_1(\Sigma)$ and $\ell_1$. We now proceed with the details of the proof.}
\begin{proof}[Proof of Proposition \ref{lem:ell_1}]
Let $\collar^+_1,\ldots,\collar^+_b$ denote the half-collars around $B_1,\ldots,B_b$, and $\tube_1,\dots,\tube_b$ be their corresponding width ($\tube_i=\tube(B_i))$. For each $\collar^+_j$ we consider the Fermi coordinates $(t,s)$ based on $B_j$, where $0\le t\le \tube_j$ and $0\le s\le |B_j|$. The Riemannian metric on $\collar_j^+$ in these coordinates is given by $\dd t^2+\cosh^2(t)\dd s^2$.

Let $f$ be an eigenfunction associated to the first non-zero Steklov eigenvalue $\sigma_1(\Sigma)$ such that $\int_{\partial \Sigma}f^2=1$. { We denote by $\bar f_j$ the average value of $f$ over $B_j$ and by $\bar f_j(\tube_j)$ the average of $f(\tube_j,\cdot)$ over $B_j$: 
\[
\bar f_j=\frac{1}{|B_j|}\int_{B_j}f(0,s)\dd s,\quad \bar f_j(\tube_j)=\frac{1}{|B_j|}\int_{B_j}f(\tube_j,s)\dd s.
\]
Since $\int_{\partial \Sigma}f^2=1$, there exists a boundary component $B_I$ such that $\int_{B_I}f^2 \geq 1/b$. 
Without loss of generality, we assume $\bar f_I\geq 0$. Note that $\bar f_I\le \frac{1}{\sqrt{|B_I|}}\|f\|_{L^2(B_I)}$. We now prove the result considering two separate cases: when $0\le \bar f_I<\frac{1}{{\sqrt{2b|B_I|}}}$, and when $\bar f_I \geq\frac{1}{\sqrt{2b|B_I| }}$.} Note that if $b=1$, since $\bar f_I=0$ the first case is the only one one needs to consider.\\

\noindent{\textbf{Case 1. $0\le \bar f_I<\frac{1}{{\sqrt{2b|B_I|}}}$.}
We define $ \tilde f:=f-\bar f_I$.
We note that $\int_{B_I}\tilde f =0$, and
\[
\int_{B_I}\tilde f ^2= \int_{B_I}(f-\bar f_I)^2=\int_{B_I}f^2-|B_I|\bar f_I^2\geq \frac{1}{b}-|B_I|\bar f_I^2\ge \frac{1}{2b}.
\]
Hence, we get
\begin{equation*}
\sigma_1(\Sigma)=\int_{\Sigma}|\nabla f|^2\geq \int_{\collar^+_I}|\nabla f|^2=\int_{\collar^+_I}|\nabla \tilde f|^2\geq \frac{1}{2b}\frac{\int_{\collar^+_I}|\nabla \tilde f|^2}{\int_{B_I}\tilde f^2}\geq \frac{1}{2b} \sigma_1^N(\collar^+_I),
\end{equation*}
where $\sigma_1^N(\collar^+_I)$ is the first non-zero mixed Steklov-Neumann eigenvalue on $\collar^+_I$ with Steklov condition on $B_I$ and Neumann condition on the other boundary component of $\collar^+_I$. The last inequality follows from the variational characterization of $\sigma_1^N(\collar^+_I)$. Steklov eigenvalues of warped products in dimension 2 such as the half-collar $\collar^+_I$ are explicitly computable. To obtain the value of $\sigma_1^N(\collar^+_I)$, we observe that the metric $\dd t^2+\cosh^2(t)\dd s^2$ is conformal to $\frac{1}{\cosh^2(t)}\dd t^2+\dd s^2$, that is $\collar^+_I$ is conformal to the right cylinder $[0, \arctan(\sinh(\tube))]\times \mathbb{S}^1_{R}$ whose mixed Steklov-Neumann eigenvalues are obtained using the method of separation of variables. Since conformal surfaces with conformal factor equal to one along the boundary are Steklov isospectral, we have }

\[
\sigma_1^N(\collar^+_I)=\frac{2\pi}{|B_I|}\tanh\left (\frac{2\pi}{|B_I|}\arctan(\frac{1}{\sinh(\frac{|B_I|}{2})})\right ).
\] Therefore,
\[
\sigma_1(\Sigma)\geq \frac{1}{2b}\sigma_1^N(\collar^+_I)\ge \frac{c_0}{b|B_I|}\min\left\{1,\frac{1}{|B_I|e^{|B_I|}}\right\}\ge\frac{c_0}{b\beta{(1+\beta)} e^{\beta}},
\]
for some positive universal constant $c_0$.

{\noindent\textbf{Case 2. $\bar f_I \geq\frac{1}{\sqrt{2b|B_I| }}$.}  
{We first show that $\sigma_1$ is bounded below by the absolute value of the difference between $\bar f_I$ and $\bar f_I(\tube_I)$.}
We have
\begin{equation}\label{eq2:l2normfi}
\begin{split}
\sigma_1(\Sigma)&=\int_{\Sigma}|\nabla f|^2\geq\int_{\collar^+_I}|\nabla f|^2 {=\int_{B_I}\int_0^{\tube_I}\left(|\partial_t f|^2+\cosh(t)^{-2}|\partial_sf|^2\right)\cosh t \dd t\dd s}\\
&\ge\int_{B_I}\int_0^{\tube_I} (\partial_t f)^2(t,s)\cosh(t) \dd t {\dd s}\\
&\geq \frac{1}{\int_0^{\tube_I} \frac{1}{\cosh(t)} \dd t}\int_{B_I} \left( \int_0^{\tube_I} \partial_tf \dd t \right)^2 \\
&\geq \frac{2}{\pi} \int_{B_I}(f(\tube_I,s)-f(0,s))^2 \dd{s}\\
&= \frac{2}{\pi}\| f_I(\tube_I,\cdot)-f(0,\cdot)\|_{L^2(B_I)}^2,
\end{split}
\end{equation}
{where the inequality between the second and the third lines is obtained by using Cauchy-Schwarz inequality.}
On the other hand,  
 we have
\begin{equation}
\begin{split}\label{eq2:bar fi}
{|}\bar f_I-\bar f_I(\tube_I){|}&\le \frac{1}{|B_I|}\|f_j(\tube_j,\cdot)-f(0,\cdot)\|_{L^1(B_I)}\\
   &\le \frac{1}{\sqrt{|B_I|}}\|f_I(\tube_I,\cdot)-f(0,\cdot)\|_{L^2(B_I)}.
\end{split}
\end{equation}
 Combining \eqref{eq2:l2normfi} and \eqref{eq2:bar fi}, we get
\begin{equation}\label{eq2:fi-fiwi}
    \bar f_I-\bar f_I(\tube_I)\le  |\bar f_I-\bar f_I(\tube_I)|\le \sqrt \frac{\pi}{{2|B_I|}}\sqrt{\sigma_1(\Sigma)}.
\end{equation}
Thus, if $\bar f_I(\tube_I)\le\frac{1}{2}\bar f_I$, replacing in \eqref{eq2:fi-fiwi} and using our assumption on $\bar f_I$, we get
\begin{equation}
\sigma_1(\Sigma)\ge\frac{1}{4\pi b}.  
\end{equation}
Let us now consider the case $\bar f_I(\tube_I)>\frac{1}{2}\bar f_I$. It implies that 
\[\sup_{s\in{[}0,|B_I|{]}}f(\tube_I,s)\ge \bar f_I(\tube_I)\ge \frac{1}{2\sqrt{2b|B_I|}}.\]
Since $\int_{\partial \Sigma} f=0$, we have $\sum_{j\neq I}\int_{B_j} f=\sum_{j\neq I}|B_j|\bar f_j=-|B_I|\bar f_I.$
Thus, there exists a geodesic boundary component $B_J$ such that 
\begin{equation}\label{eq2:fj}
|B_J|\bar f_J\le -\frac{|B_I|\bar f_I}{b-1}\le -\frac{\sqrt{|B_I|}}{(b-1)\sqrt{2b}}.
\end{equation}
Note that inequalities \eqref{eq2:l2normfi}--\eqref{eq2:fi-fiwi} hold for any $j$ and are not specific to $j=I$. In particular, we have
\begin{equation}\label{eq2:fjwj} 
\sqrt{|B_J|}| \bar f_J(\tube_J)-\bar f_J|\le\sqrt\frac{\pi\sigma_1(\Sigma)}{2}.\end{equation}
We also have
\begin{equation}
    \inf_{s\in{[}0,|B_J|{]}} f(\tube_J,s)\le {\bar f_J(\tube_J)}.
    \end{equation}
If $\inf_{s\in{[}0,|B_J|{]}} f(\tube_J,s)\ge\frac{1}{4\sqrt{2b}\sqrt{|B_I|}}$, then {$\bar f_J(\tube_J)\ge \frac{1}{4\sqrt{2b}\sqrt{|B_I|}}.$}
 It implies
{
\begin{align*}
\sqrt{|B_J|}( \bar f_J(\tube_J)-\bar f_J)&\ge\frac{\sqrt{|B_J|}}{4\sqrt{2b}\sqrt{|B_I|}}+  \frac{\sqrt{|B_I|}}{(b-1)\sqrt{2b}\sqrt{|B_J|}}\\
&\ge\frac{1}{4(b-1)\sqrt{2b}}\left(\frac{\sqrt{|B_J|}}{\sqrt{|B_I|}}+\frac{\sqrt{|B_I|}}{\sqrt{|B_J|}}\right)\\
&\geq \frac{1}{4(b-1)\sqrt{2b}}.
\end{align*}}
 Together with \eqref{eq2:fjwj}, we get
  { \[\sigma_1(\Sigma)\ge\frac{1}{16\pi b(b-1)^2}.\]}
 We now assume $\inf_{s\in{[}0,|B_J|{]}} f(\tube_J,s)<\frac{1}{4\sqrt{2b}\sqrt{|B_I|}}.$ Then
\begin{align}\label{eq2:lb}
   \sup_{s\in{[}0,|B_I|{]}}f(\tube_I,s)-\inf_{s\in{[}0,|B_J|{]}}f(\tube_J,s)&\ge \frac{1}{4\sqrt{2b}\sqrt{|B_I|}}\ge  \frac{1}{4\sqrt{2b\beta}}.
    \end{align}
    }

Let $p_I=(\tube_I,s_I)$ and $p_J=(\tube_J,s_J)$ (points are represented in the Fermi coordinates based on the corresponding geodesic $B_I$ and $B_J$) be such that 
\[f(p_I)=\sup_{s\in [0,|B_I|]} f(\tube_I,s),\quad\text{and}\quad f(p_J)=\inf_{s\in [0,|B_J|]} f(\tube_J,s).\]
Let us consider the $\eps$-thick-thin decomposition of $\Sigma$ as described in Section \ref{sec:per}. Note that $p_I, p_J\in \Sigma_{\thick}^\eps$. Let $$\curve:[0,1]\to \Sigma\setminus (\bigcup_{j=1}^b\collar_j^+\cup (\bigcup_{j=1}^p \cusp_j))$$ be an arbitrary curve connecting $p_I$ and $p_J$ with $\curve(0)=p_I$, $\curve(1)=p_J$. Moreover, we make the following additional assumptions.
\begin{itemize}
\item[a)]The interval
     $[0,1]$ admits  a partition $0=t_0<t_1<t_2<\cdots<t_{n-1}<t_n=1$ such that either $\curve([t_i,t_{i+1}])$  is a subset of a connected component of $\Sigma_{\thick}^\eps$, or  $\curve((t_i,t_{i+1}))\subset\collar(\gamma_j)^\circ\subset\Sigma_{\thin}^\eps$ for some $j$ with $\curve(t_{i+1})=\curve(t_i)^*\in \partial\collar(\gamma_j)$, where $\curve(t_i)^*$ is the reflection of $\curve(t_i)$ with respect to $\gamma_j$. 
     \item[b)] Each element of the collection $\{\curve((t_i,t_{i+1}))\}$ belongs to a separate connected components of $\Sigma_{\thick}^\eps$ or $\Sigma_{\thin}^\eps$.
     \end{itemize}

{The number of {connected components} of the thick and thin parts is bounded by  {$c_1|\chi|$, where $c_1$ is a positive universal constant.} If  {$\curve([t_i,t_{i+1}])$} is  in a connected component $\Sigma_i$ of $\Sigma_{\thick}^\eps$, then by Lemma~\ref{lem:sobinq} 
\begin{align*}
    |f\circ \curve(t_i)-f\circ \curve(t_{i+1})|\le {c_2 e^{\beta/2}\sqrt{|\Sigma_i|\sigma_1(\Sigma)}},
\end{align*}
{where $c_2$ is a positive universal constant such that the right-hand side is an upper bound for  $c(\beta)$ as given in Lemma \ref{lem:sobinq}. }
If there exists a curve $\curve$ as described above such that  {whenever $\curve([t_i,t_{i+1}])$ is a subset of the thin part, we have} 
$$|f\circ \curve(t_i)-f\circ \curve(t_{i+1})|\le  \frac{1}{8{c_1|\chi|}\sqrt{2b\beta}},$$ then 

\begin{align*} f(p_I)-f(p_J) &\le \sum_i|f\circ \curve(t_{i})-f\circ \curve(t_{i+1})| \\
&\le c_2 e^{\beta/2}\left(\sum_i \sqrt{|\Sigma_i|}\right)  \sqrt{\sigma_1(\Sigma)} + \frac{1}{8\sqrt{2b\beta}} \\
&\le c_2  e^{\beta/2}\sqrt{c_1 |\chi| |\Sigma|} \sqrt{\sigma_1(\Sigma)}  + \frac{1}{8 \sqrt{2b\beta}} \\
&\le c_3 e^{\beta/2} |\chi| \sqrt{\sigma_1(\Sigma)}  + \frac{1}{8 \sqrt{2b\beta}}.
\end{align*}

Combining it with \eqref{eq2:lb} we get

\[{\sigma_1(\Sigma)}\ge {\frac{1}{c_4b\beta e^\beta\chi^2}},\]

and we obtain the result. Here, $c_3$ and $c_4$ are positive universal constants.}

If such curve does not exist, it means that for any $\curve$ described above there  exists  an $i$ with $\curve|_{[t_i,t_{i+1}]}$  entirely in $\collar(\gamma_j)\subset\Sigma_{\thin}^\eps$ for some $j$ such that
\begin{equation}\label{eq2:thinpartub}
    |f\circ \curve(t_i)-f\circ \curve(t_{i+1})|> \frac{1}{8{c_1|\chi|}\sqrt{2b\beta}},
    \end{equation}
then by Lemma \ref{lem:tube-energy}, we have 
\begin{align*}
        \int_{\collar(\gamma_j)}|\nabla f|^2  \geq \frac{1}{2^7{c_1^2\chi^2}b\beta}|\gamma_j|.
    \end{align*}
     Let $\collar_{j_1},\cdots, \collar_{j_k}$ collection of such collars.   Hence, 
    \begin{align*}
    \sigma_1(\Sigma)=\int_\Sigma|\nabla f|^2&\ge \sum_m\int_{\collar_{j_m}}|\nabla f|^2\ge \frac{1}{2^7{c_1^2\chi^2}b\beta}\sum_m|\gamma_{j_m}|.
    \end{align*}
    But $\{\gamma_{j_m}\}$  must divide $\Sigma$ into at least two connected components one containing $B_I$ and the other $B_J$. Otherwise, $p_I$ and $p_J$ can be connected by a curve $\curve$ as described above such that there is no interval in the partition for which \eqref{eq2:thinpartub} holds. It contradicts our assumption. {Therefore}, it is clear that a subcollection of $\{\gamma_{j_m}\}$ gives us a multi-geodesic in $\collar_1$ and  we conclude that $\sum_m|\gamma_{j_m}|\ge \ell_1$. In summary, we obtain   
\begin{align} \label{eq:lowerbound} 
\sigma_1(\Sigma)&\ge c_5\,\min\left\{\frac{1}{b^3},\frac{1}{b\beta(1+\beta) e^{\beta}},\frac{1}{b\beta e^\beta\chi^2},\frac{\ell_1}{\chi^2b\beta}\right\}\\
\nonumber&{\ge \frac{c_5}{b\chi^2}\,\min\left\{\frac{1}{(1+\beta)^2 e^{\beta}},\frac{\ell_1}{\beta}\right\}}
\end{align}
where $c_5$ is a positive universal constant. We also observe that when $\beta$ is small enough, the minimum is achieved either by the first or the last term in the right-hand side of \eqref{eq:lowerbound}.

\end{proof}
{\begin{remark}\label{rem:ellepsilon}
 The proof of Proposition \ref{lem:ell_1} shows that $\ell_1$ appears in the lower bound of $\sigma_1$ only if there exists a multi-geodesic $c \in \chain_1$ such that the length of each closed geodesic in $c$ is at most $2\eps$. 
 {In particular, we can replace $\ell_1$ with  $\ell_1^\eps$ in Proposition \ref{lem:ell_1}. Here, $$\ell_k^\eps:=\inf\left\{|\curve|: \curve\in \chain_k\cap\Sigma_{\thin}^\eps\right\}.$$ We set $\ell_k^\eps=\infty$ if $\chain_k\cap\Sigma_{\thin}^\eps=\emptyset$. Note that, by abuse of notation,  $\curve\in\Sigma_{\thin}^\eps$  means that its image belongs to $\Sigma_{\thin}^\eps$.  When $\ell_k^\eps<\infty$, then $\ell_k^\eps=\ell_k$. It shows that when $\Sigma_{\thin}^\eps=\emptyset$, then the lower bound only depends on $\chi, b$ and $\beta$ and not on $\ell_1$. }
\end{remark}}

{The next theorem shows that the result of Proposition \ref{lem:ell_1} can be extended to some higher-order {Steklov} eigenvalues. }
\begin{theorem}\label{thm:swy2d}
    Let $\Sigma$ be a hyperbolic surface of signature $(g,b;p)$. Let
    \begin{align*}
        K = \begin{cases}
                b - 1 & \text{if $(g \geq 1 $ or $ p \geq 2)$ and $b \geq 1$}, \\
                b - 2 & \text{if $g = 0, p = 1$ and $b \geq 2$}, \\ 
                b - 3 & \text{if $g = 0, p = 0$ and $b \geq 3$}.
        \end{cases}
    \end{align*}
     
   Then there exists a positive universal constant $c$ such that
  
   \[
     \sigma_k(\Sigma)
 \ge \frac{c}{b|\chi|^3} \min\left\{\frac{1}{(1+\beta)^2e^\beta},\frac{\ellSt_k}{\beta}\right\},\qquad 0<k\le K,
    \]
    and
    \[
        \sigma_{K+1}(\Sigma) \geq \frac{c}{b\chi^2(1+\beta)^2e^\beta}. 
    \]
\end{theorem}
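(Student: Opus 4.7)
The plan is to bootstrap from Proposition \ref{lem:ell_1} by constructing, for each $1\le k\le K+1$, a suitable test function in the span of the lowest $k+1$ Steklov eigenfunctions and then running the two-case argument from Proposition \ref{lem:ell_1} on it. For $1\le k\le K$, Lemma \ref{lem:C_k} provides a multi-geodesic $\curve\in\chain_k$ with $|\curve|\le 2\ell_k$ dividing $\Sigma$ into $k+1$ components $\Sigma_0,\ldots,\Sigma_k$; fix a distinguished boundary component $B_{I_j}\subset\partial\Sigma\cap\Sigma_j$ in each piece. Let $\phi_0=|\partial\Sigma|^{-1/2},\phi_1,\ldots,\phi_k$ be Steklov eigenfunctions orthonormal in $L^2(\partial\Sigma)$ associated to $\sigma_0=0,\sigma_1,\ldots,\sigma_k$.

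Next, I construct $f=\sum_{i=0}^k a_i\phi_i$ by imposing the $k$ linear conditions $\int_{B_{I_j}}f=0$ for $j=0,1,\ldots,k-1$ on the $k+1$ unknowns $a_i$; a nonzero solution exists and is normalized by $\sum a_i^2=1$. Then $\|f\|_{L^2(\partial\Sigma)}^2=1$, and orthogonality of $\{\nabla\phi_i\}$ in $L^2(\Sigma)$ gives $\int_\Sigma|\nabla f|^2=\sum a_i^2\sigma_i\le\sigma_k$. By pigeonhole there exists $B_I\subset\partial\Sigma$ with $\int_{B_I}f^2\ge 1/b$, to which I apply the dichotomy of Proposition \ref{lem:ell_1}. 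If $|\bar f_I|<(2b|B_I|)^{-1/2}$, the half-collar test function comparison with the first mixed Steklov-Neumann eigenvalue on the half-collar around $B_I$ yields $\sigma_k\ge c/(b\beta(1+\beta)e^\beta)$, matching the first term of the minimum. Otherwise, the imposed conditions $\bar f_{I_j}=0$ for $j\le k-1$ force $B_I$ to lie outside $\{B_{I_0},\ldots,B_{I_{k-1}}\}$, and controlling $\int_{\partial\Sigma}f$ by $|a_0|\sqrt{|\partial\Sigma|}$ together with the vanishing averages produces another boundary component $B_J$ in a different piece with $\bar f_J$ of opposite sign and comparable magnitude. Connecting points $p_I,p_J\in\Sigma_{\thick}^\eps$ at which $f$ achieves extreme boundary values by a curve $\curve$ as in Proposition \ref{lem:ell_1}, Lemma \ref{lem:sobinq} bounds the oscillation on the traversed thick components and Lemma \ref{lem:tube-energy} relates the oscillation on the traversed thin collars to their lengths.

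The main obstacle is ensuring that the traversed thin collars form a multi-geodesic in $\chain_k$, not merely in $\chain_1$, so that their total length is bounded below by $\ell_k$. This is precisely where the $k$ orthogonality conditions enter: since $f$ has vanishing mean on $k$ marked boundary components distributed across $k$ different pieces, any curve on which $f$ acquires a large total variation must traverse enough short closed geodesics of $\curve$ to separate $\Sigma$ into $k+1$ components each containing a boundary component, so the collection of traversed geodesics lies in $\chain_k$ and therefore has length at least $\ell_k$. The extra factor of $|\chi|$ beyond the $k=1$ bound arises from the chain-of-balls step, where the path may now visit up to $O(|\chi|)$ distinct connected components of $\Sigma_{\thick}^\eps$, each contributing through Cauchy-Schwarz.

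For $k=K+1$, the same construction and case analysis apply, but Lemma \ref{lem:C_k} gives $\chain_{K+1}=\emptyset$, so the multi-geodesic sub-case of Case 2 cannot produce a valid separator; only the $\ell$-free bound from Case 1 together with the thick-part oscillation contribute, yielding $\sigma_{K+1}(\Sigma)\ge c/(b\chi^2(1+\beta)^2e^\beta)$.
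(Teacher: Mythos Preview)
There is a genuine gap at exactly the point you flag as the ``main obstacle.'' The Dodziuk--Randol argument of Proposition~\ref{lem:ell_1} produces a single pair $p_I,p_J$ with a large difference $f(p_I)-f(p_J)$ and concludes that the thin collars carrying large oscillation must separate $B_I$ from $B_J$; this places their core geodesics in $\chain_1$, not in $\chain_k$. Your $k$ vanishing-average conditions $\bar f_{I_j}=0$ constrain $f$ only on the chosen boundary circles, not the variation of $f$ across the pieces $\Sigma_j$, and there is no mechanism forcing the separating collection to decompose $\Sigma$ into $k+1$ components. Note also that the thin collars arising in the argument come from $\Sigma_{\thin}^{\eps}$ and have no a~priori relation to your fixed $\curve\in\chain_k$. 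At best your scheme yields $\sigma_k\ge c\,\ell_1/(b|\chi|^3\beta)$, which is the wrong quantity. The $K+1$ case has the same defect: since $\chain_{K+1}=\emptyset$ you cannot even choose the initial $\curve$ on which your construction rests, and in any event emptiness of $\chain_{K+1}$ does not preclude the appearance of a separator in $\chain_1$ in Case~2, so the $\ell$-dependent branch cannot simply be discarded. A smaller issue is that, because your test function contains a $\phi_0$ component, $\int_{\partial\Sigma}f=a_0\sqrt{|\partial\Sigma|}$ need not vanish, and the existence of a component $B_J$ with $\bar f_J$ of opposite sign and comparable size (let alone in a different piece) is not established.

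The paper avoids all of this by a domain-monotonicity argument rather than a test-function-in-the-span argument. One takes a minimizing $\curve\in\chain_k\cap\Sigma_{\thin}^{\eps}$, removes all of its geodesics except the longest one $\gamma_{\max}$ to cut $\Sigma$ into $k$ pieces $\Sigma_1,\ldots,\Sigma_k$ each meeting $\partial\Sigma$, and invokes the bracketing $\sigma_k(\Sigma)\ge\min_i\sigma_1^N(\Sigma_i)$. The Steklov--Neumann version of Proposition~\ref{lem:ell_1} (Remark~\ref{remSN}) then applies on each piece, and minimality of $\curve$ forces $\ell_1^{*}(\Sigma_i)\ge|\gamma_{\max}|\ge\ell_k/p$ with $p\le c|\chi|$; this is precisely what upgrades $\ell_1^{*}$ to $\ell_k$ at the cost of one extra factor $|\chi|$. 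The bound for $\sigma_{K+1}$ then comes from the case where no further refinement in $\Sigma_{\thin}^{\eps}$ exists, so each $\ell_1^{*,\eps}(\Sigma_i)=\infty$ and only the $\beta$-dependent term survives.
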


\begin{remark}
\label{remSN}
In order to prove this result, we will use a generalisation of Proposition \ref{lem:ell_1} to the first non-zero mixed Steklov-Neumann eigenvalue of a hyperbolic surface $\Sigma$ {of signature $(g,b^*;p)$} with Steklov condition on {$b<b^*$} geodesic boundary components $\{B_1,\dots,B_b\}$ and Neumann condition on the remaining boundary components $\{B_{b+1},\dots, B_{b^*}\}$. The result involves the quantity $\ellSt_1^* := \inf_{\curve \in {\chain^*_1}} |\curve|,$
    where ${\chain_1^*}$ denotes the set of multi-geodesics formed by disjoint simple closed geodesics, dividing $\Sigma$ into two connected components, each containing at least one boundary component with Steklov condition. When ${\chain_1^*=\emptyset}$, we set $\ell_1^*=\infty$. 
    Let $\beta^*:=\max\{|B_i|, i=1,\dots, b^*\}$. {Then $\sigma_1^N(\Sigma)$ has the same lower bound as given in Proposition~\ref{lem:ell_1} with $\ell_1$ 
    replaced with $\ell_1^*$ 
    and $\beta$ with $\beta^*$.
    \begin{equation}\label{eq:lowerbound for Steklov-Neumann}
          \sigma_1^N(\Sigma) \geq
           \frac{c}{b\chi^2} \min\left\{\frac{1}{(1+\beta^*)^2e^{\beta^*}},\frac{\ellSt^*_1}{\beta^*}\right\}. 
           \end{equation}
           }
    The proof is exactly the same as the proof of Proposition \ref{lem:ell_1}. 
    If the maximum length of the boundary components with Neumann condition is smaller than {$\max\{\beta,2\arsinh(1)\}$, where $\beta=\max\{|B_i|: i=1,\ldots, b\}$, then we can replace $\beta^*$ with $\beta$, and $\ell_1^*$ with $\ell_1^{*,\eps}$ in \eqref{eq:lowerbound for Steklov-Neumann}, taking into account Remark \ref{rem:ellepsilon}.} 
\end{remark}
{Theorem \ref{thm:swy2d} holds if we replace $\ell_k$ with $\ell_k^\eps$ and the statements are equivalent.} Hence, we prove it in this case using Remarks~\ref{rem:ellepsilon} and \ref{remSN} to simplify the argument.
\begin{proof}[Proof of Theorem \ref{thm:swy2d}] For a given $k\in \{1,\ldots,K+1\}$, let $1\leq s\leq k$ be the largest $s$ such that  $\ell_s^\eps\neq\infty$. If such $s$ does not exist then $\ell_1^\eps=\infty$ and the result immediately follows from Proposition~\ref{lem:ell_1} together with Remark \ref{rem:ellepsilon}.

Let us first consider the case when $s=k$. Note that it automatically implies that $k\le K$. We consider a curve $\curve=\gamma_1 \cup \dots \cup \gamma_p{\in \chain_k\cap\Sigma_{\thin}^\eps}$ with $|\curve|={\ellSt_k^\eps}$. Since $|\curve|= {\ellSt_k^\eps}$, one of the $p$ components of $\curve$ must be of length $\geq \frac{{\ellSt_k^\eps}}{p}$; we call it $\gamma_{\max}$. We decompose $\Sigma$ into $k$ components $\Sigma_1,\ldots,\Sigma_k$ containing at least one boundary component by removing from $\Sigma$ all the geodesics of $\curve$ except $\gamma_{\max}$.  On each $\Sigma_i$, we consider the mixed Steklov-Neumann problem with Steklov condition on $\Sigma_i\cap \partial \Sigma$ and Neumann condition on $\partial \Sigma_i{\cap\Sigma}$. Since the {$\Sigma_i$'s} are disjoint, by standard variational argument, we have
 \[
 \sigma_k(\Sigma)\ge\min\{\sigma_1^N(\Sigma_1),\ldots,\sigma_1^N(\Sigma_{k})\}
 \]
Because {$\curve\in \chain_k\cap \Sigma_{\thin}^\eps$}, all the boundaries of $\Sigma_i$ with Neumann condition are of length $\leq {2\eps}\le 2\arsinh(1)$. {Hence,} we have from Remark \ref{remSN} that
$\sigma_1^N(\Sigma_i)\ge \frac{c_6}{b\chi^2}\,\min\left\{\frac{1}{(1+\beta)^2 e^{\beta}},\frac{{\ell_1^{*,\eps}(\Sigma_i)}}{\beta}\right\}$. 
{Moreover,}
 $\ellSt_1^{*,\eps}(\Sigma_i)\geq |\gamma_{\max}|\geq \frac{{\ellSt_k^\eps}}{p}$,  because {otherwise} it would contradict the fact that $c$ is minimal. We also have $p\leq c_7|\chi|$, for some positive universal constant $c_7$. Therefore, $$\sigma_k(\Sigma)\geq \frac{c_8}{b|\chi|^3}\,\min\left\{\frac{{\chi}}{(1+\beta)^2 e^{\beta}},\frac{{\ell_k^\eps}}{\beta}\right\}.$$
 
{We now consider the case when $1\le s<k$ and} we show that $\sigma_k(\Sigma)\geq \frac{c_6}{b\chi^2(1+\beta)^2 e^{\beta}}$. We consider a curve {$\curve_{s}\in \chain_s\cap\Sigma_{\thin}^\eps$}  such that ${|\curve_{s}|=\ellSt_{s}^\eps}$. We decompose $\Sigma$ into {$s+1$} components $\Sigma_1,\ldots,\Sigma_{{s+1}}$ containing at least one boundary component by removing from $\Sigma$ all the geodesics of $\curve_s$. On each $\Sigma_i$, we consider the mixed Steklov-Neumann problem with Steklov condition on $\Sigma_i\cap \partial \Sigma$ and Neumann condition on $\partial \Sigma_i{\cap\Sigma}$. We have
 \[
{\sigma_k(\Sigma)\ge \sigma_{s+1}}(\Sigma)\geq\min\{\sigma_1^N(\Sigma_1),\ldots,\sigma_1^N(\Sigma_{s+1})\}.
 \]

{Again,} because all the boundaries of $\Sigma_i$ with Neumann condition are of length $\le {2\eps}\le 2\arsinh(1)$, we have from Remark \ref{remSN} that 
$\sigma_1^N(\Sigma_i)\ge \frac{c_6}{b\chi^2}\,\min\left\{\frac{1}{(1+\beta)^2 e^{\beta}},\frac{{\ell_1^{*,\eps}}(\Sigma_i)}{\beta}\right\}$. We claim that $\ell_1^{*,\eps}(\Sigma_i)=\infty$ for all $i$. If there exists $I$ for which there exist $\curve_I\in\chain^*_1(\Sigma_I)\cap\Sigma_{\thin}^\eps$, then $\curve_I\cup \curve\in \chain_{s+1}\cap \Sigma_{\thin}^\eps$ and it contradicts the maximality of $s$.   
In summary, we obtain
\[
\sigma_k(\Sigma)\geq \frac{c_8}{b|\chi|^3}\,\min\left\{\frac{{\chi}}{(1+\beta)^2 e^{\beta}},\frac{{\ellSt_k^\eps}}{\beta}\right\}, \qquad 1\le k\le K,
\]
and
\[
\sigma_{K+1}\geq  \frac{c_6}{b\chi^2(1+\beta)^2 e^{\beta}}.
\]
 
\end{proof}
We now give a remark on {upper bounds}.
\begin{remark}\label{rem:upperbound}
For every $k\ge1$, the bounds of the form 
$\sigma_k(\Sigma)|\partial \Sigma|\le c(|\chi|+k)$
is known for compact surfaces {\cite{Kar,H11} (see also \cite{GP12,CGE})} and it remains true in the setting of finite volume surfaces. As a result, we get
\[\sigma_k\le c_1\frac{|\chi|}{\beta}.\]
Using a classical comparison argument with the Steklov-Dirichlet eigenvalue on half-collar near the boundary, we get  (see \cite[Lemma 3]{Per22}):
$$\sigma_k 
\le c_2e^\beta,\qquad 1\le k<b.$$
{It has been shown in \cite{Per22} that if $\ellSt_k$ is sufficiently small, $\sigma_k\leq c_3\frac{\ellSt_k}{\alpha}$, {where $\alpha$ is the minimum length of boundary components}. The proof is by constructing appropriate test functions around the collar or half collars and constant elsewhere. More precisely,} by \cite[Proof of Theorem 3]{Per22}, we have the following upper bound for $1\le k\le K$. Let $\curve\in \chain_k$ such that $|\curve|=\ell_k$. Let $\Sigma_j$ be the connected components of $\Sigma\setminus\curve$, and let $L_j$ denote the length of ${\Sigma_j}\cap\partial\Sigma$. Then
\[\sigma_k\le \max_j\frac{\ell_k}{L_j\arctan\left(\frac{1}{\sinh\left(\frac{\ell_k}{2}\right)}\right)}\le \max_j\frac{1}{L_j}\ell_ke^{\ell_k/2}\le \frac{\ell_ke^{\ell_k/2}}{\alpha},\qquad 1\le k\le K.\]
 Note that if there exists $\curve\in\chain_k$ with $|\curve|=\ell_k$ such that $L_j$ are comparable to $\beta$ then we can replace $\alpha$ by $\beta$. 
 
In summary, we have
 $$\sigma_k\le c_4\min\left\{{e^{\beta}},{\frac{|\chi|}{\beta}},\frac{\ell_ke^{\ell_k/2}}{\alpha}\right\}, 
 \quad 1\le k<K,\quad
 \text{and}\quad \sigma_{K+1}
 \le c_1{\frac{|\chi|}{\beta}}.$$
\end{remark}

{All the results above can be extended if the curvature assumption is relaxed to a pinched negatively curved surface. We first consider the compact setting. 
\begin{theorem}\label{rem:pinched curvature2}   
 Let $(\Sigma,h)$ be a compact Riemannian surface with geodesic boundary, and assume its Gaussian curvature lies in the interval {$[-1, -\kappa]$}, where $\kappa \in (0,1)$. Then, the conclusions of Proposition~\ref{lem:ell_1}, Theorem~\ref{thm:swy2d}, 
 and 
{Remark \ref{rem:upperbound}}
 still hold, with $\kappa$ {and $\kappa^{-1/2}$ respectively appearing} as a multiplicative factor in the lower  {and upper} bounds.
 \end{theorem}
 }
 \begin{proof}
    Let $\double\Sigma$ denote the double of $\Sigma$. Note that $\double\Sigma$ is a closed surface equipped with an involution whose fixed points are precisely the geodesic boundary. If we use the normalised Ricci flow to deform its metric conformally to the unique hyperbolic metric in the same conformal class (see \cite{CK04}), then, because the normalised Ricci flow preserves isometries, the resulting hyperbolic metric $\bar{h}$ also admits the same involution symmetry along the boundary points. Hence, each component of $\partial \Sigma$  is geodesic in $(\double\Sigma, \bar h)$ because they are fixed points of an isometry.   Therefore, we can view $(\Sigma, \bar{h})$ as a subdomain of $(\double\Sigma, \bar{h})$, whose boundary is geodesic.

Next, write $h = \rho \bar{h}$. Similar to what is done in \cite{SWY80},  by a generalisation of the Ahlfors--Schwarz argument (see \cite{Wol}), we have $
1 \leq \rho \leq \frac{1}{\kappa},$
and therefore, 
\[
\sqrt{\kappa} \,\sigma_{k}(\Sigma, \bar{h})\le \sigma_k(\Sigma, h) \leq \sigma_{k}(\Sigma, \bar{h}),\quad \text{and}\quad \ell_k(\Sigma,\bar h)\le \ell_k((\Sigma,h)\le\frac{1}{\sqrt{\kappa}}\ell_k(\Sigma,\bar h).
\]
The statement then follows. 

 \end{proof}

 We end with a remark on the non-compact finite-volume setting. 
 \begin{remark}
     In the statement of Theorem \ref{rem:pinched curvature2}, if we remove the compactness assumption and assume that the surface has finite volume, then similar results hold.

   Indeed, if $h$ is asymptotic to a multiple of a hyperbolic cusp metric on each end of $\Sigma$, we can adapt the above argument and perform the normalised Ricci flow ~\cite{LMS09} {and obtain the same statement as in the compact case.} 
If this is  not case, we can adapt the proofs of Proposition \ref{lem:ell_1} and Theorem~\ref{thm:swy2d} to get similar results with bounds depending also on $\kappa$ and the expression in terms of $\beta$ may be slightly different. {We give hereafter an idea of how this is done.} 
The $\eps$-thick-thin decomposition defined in Section \ref{sec:per} holds when the Gaussian curvature of a Riemannian surface $(\Sigma,h)$ with geodesic boundary lies in the interval $[-1, -\kappa]$, where $\kappa \in (0,1)$. The primary difference is that the tubes and cusps are diffeomorphic, rather than isometric, to the warped product sets described above. However, the definition of $\eps$ remains unchanged and independent of $\kappa$ (see \cite[Theorem 4.3.2]{Bus92} and \cite[\S 10]{BGS85}).

Furthermore, Lemma \ref{lem:tube-energy} can be established by comparing the Dirichlet energy of a function on a collar around a simple closed geodesic with that in hyperbolic space. Specifically, on $\collar(\gamma)$, the metric $h$ can be expressed in Fermi coordinates along $\gamma$ as $h = \dd t^2 + A(t,s)^2 \dd s^2$. By writing the Gaussian curvature in terms of $A$ and using the curvature bounds, we obtain two second-order differential inequalities which imply  $$\cosh(\sqrt{\kappa}t)\le|A(t,s)|\le \cosh(t),$$ which allows one to adapt the proof of Dodziuk and Randol \cite[Lemma 3]{DR86}. 

{Lemma \ref{lem:sobinq} also holds in this context; for a proof, refer to \cite[Lemma 4.6]{BBHM}. The constant coefficient in this lemma depends on the norm of the derivatives of the Gaussian curvature. However, there exists a metric quasi-isometric to the original one with some nice properties, where the norm of the derivative of the curvature is bounded (see \cite[Remark 4.7]{BBHM}). Therefore, only in the final step of the proof for Case 2 in Proposition 3.3 we need to switch to the quasi-isometric metric before applying Lemma 2.2. A similar approach is used in \cite{BBHM}.
   }

 \end{remark}
\noindent

\bibliographystyle{alpha}
\bibliography{HyperbolicRef}

\begin{thebibliography}{CESG11}

\bibitem[BBHM]{BBHM}
Ara Basmajian, Jade Brisson, Asma Hassannezhad, and Antoine Métras.
\newblock Tubes and steklov eigenvalues in negatively curved manifolds.
\newblock Int. Math. Res. Not. IMRN, to apper.

\bibitem[BGS85]{BGS85}
Werner Ballmann, Mikhael Gromov, and Viktor Schroeder.
\newblock {\em Manifolds of nonpositive curvature}, volume~61 of {\em Progress in Mathematics}.
\newblock Birkh\"{a}user Boston, Inc., Boston, MA, 1985.

\bibitem[BPS12]{BPS}
Florent Balacheff, Hugo Parlier, and St\'ephane Sabourau.
\newblock Short loop decompositions of surfaces and the geometry of {J}acobians.
\newblock {\em Geom. Funct. Anal.}, 22(1):37--73, 2012.

\bibitem[Bre02a]{Bre02a}
Simon Brendle.
\newblock Curvature flows on surfaces with boundary.
\newblock {\em Math. Ann.}, 324(3):491--519, 2002.

\bibitem[Bre02b]{Bre02b}
Simon Brendle.
\newblock A family of curvature flows on surfaces with boundary.
\newblock {\em Math. Z.}, 241(4):829--869, 2002.

\bibitem[Bur88]{burger88}
Marc Burger.
\newblock Asymptotics of small eigenvalues of {R}iemann surfaces.
\newblock {\em Bull. Amer. Math. Soc. (N.S.)}, 18(1):39--40, 1988.

\bibitem[Bur90]{burger90}
Marc Burger.
\newblock Small eigenvalues of {R}iemann surfaces and graphs.
\newblock {\em Math. Z.}, 205(3):395--420, 1990.

\bibitem[Bus92]{Bus92}
Peter Buser.
\newblock {\em Geometry and spectra of compact {R}iemann surfaces}, volume 106 of {\em Progress in Mathematics}.
\newblock Birkh\"{a}user Boston, Inc., Boston, MA, 1992.

\bibitem[CESG11]{CGE}
Bruno Colbois, Ahmad El~Soufi, and Alexandre Girouard.
\newblock Isoperimetric control of the {S}teklov spectrum.
\newblock {\em J. Funct. Anal.}, 261(5):1384--1399, 2011.

\bibitem[Cha21]{Cha}
Asad Chaudhary.
\newblock {\em Estimates for Small Eigenvalues of the Laplacian and Conformal Laplacian on Closed Manifolds}.
\newblock PhD thesis, University of Oxford, 2021.

\bibitem[CK04]{CK04}
Bennett Chow and Dan Knopf.
\newblock {\em The {R}icci flow: an introduction}, volume 110 of {\em Mathematical Surveys and Monographs}.
\newblock American Mathematical Society, Providence, RI, 2004.

\bibitem[Col85]{Col85}
Bruno Colbois.
\newblock Petites valeurs propres du laplacien sur une surface de riemann compacte et graphes.
\newblock {\em C. R. Acad. Sci. Paris Sér. I Math}, 20(301):927--930, 1985.

\bibitem[Dod87]{Dod}
Jozef Dodziuk.
\newblock A lower bound for the first eigenvalue of a finite-volume negatively curved manifold.
\newblock {\em Bol. Soc. Brasil. Mat.}, 18(2):23--34, 1987.

\bibitem[DPRS87]{DRS87}
Jozef Dodziuk, Thea Pignataro, Burton Randol, and Dennis Sullivan.
\newblock Estimating small eigenvalues of riemann surfaces.
\newblock {\em The legacy of Sonya Kovalevskaya (Cambridge, Mass., and Amherst, Mass., 1985), Contemp. Math}, 64:93--121, 1987.

\bibitem[DR86]{DR86}
Jozef Dodziuk and Burton Randol.
\newblock Lower bounds for {$\lambda_1$} on a finite-volume hyperbolic manifold.
\newblock {\em J. Differential Geom.}, 24(1):133--139, 1986.

\bibitem[Esc97]{Esc97}
Jos\'e~F. Escobar.
\newblock The geometry of the first non-zero {S}tekloff eigenvalue.
\newblock {\em J. Funct. Anal.}, 150(2):544--556, 1997.

\bibitem[Esc99]{Esc99}
Jos\'e~F. Escobar.
\newblock An isoperimetric inequality and the first {S}teklov eigenvalue.
\newblock {\em J. Funct. Anal.}, 165(1):101--116, 1999.

\bibitem[GP10]{GP10}
Alexandre Girouard and Iosif Polterovich.
\newblock On the {H}ersch-{P}ayne-{S}chiffer estimates for the eigenvalues of the {S}teklov problem.
\newblock {\em Funktsional. Anal. i Prilozhen.}, 44(2):33--47, 2010.

\bibitem[GP12]{GP12}
Alexandre Girouard and Iosif Polterovich.
\newblock Upper bounds for {S}teklov eigenvalues on surfaces.
\newblock {\em Electron. Res. Announc. Math. Sci.}, 19:77--85, 2012.

\bibitem[GR19]{GR19}
Nadine Gro{\ss}e and Melanie Rupflin.
\newblock Sharp eigenvalue estimates on degenerating surfaces.
\newblock {\em Comm. Partial Differential Equations}, 44(7):573--612, 2019.

\bibitem[Has11]{H11}
Asma Hassannezhad.
\newblock Conformal upper bounds for the eigenvalues of the {L}aplacian and {S}teklov problem.
\newblock {\em J. Funct. Anal.}, 261(12):3419--3436, 2011.

\bibitem[HHH24]{HHH22}
Xiaolong~Hans Han, Yuxin He, and Han Hong.
\newblock Large {S}teklov eigenvalues on hyperbolic surfaces.
\newblock {\em Math. Z.}, 308(2):Paper No. 37, 21, 2024.

\bibitem[HM20]{HM}
Asma Hassannezhad and Laurent Miclo.
\newblock Higher order {C}heeger inequalities for {S}teklov eigenvalues.
\newblock {\em Ann. Sci. \'Ec. Norm. Sup\'er. (4)}, 53(1):43--88, 2020.

\bibitem[Jam15]{Jam15}
Pierre Jammes.
\newblock Une in\'egalit\'e de {C}heeger pour le spectre de {S}teklov.
\newblock {\em Ann. Inst. Fourier (Grenoble)}, 65(3):1381--1385, 2015.

\bibitem[JMS09]{LMS09}
Lizhen Ji, Rafe Mazzeo, and Natasa Sesum.
\newblock Ricci flow on surfaces with cusps.
\newblock {\em Math. Ann.}, 345(4):819--834, 2009.

\bibitem[Kar17]{Kar}
Mikhail Karpukhin.
\newblock Bounds between {L}aplace and {S}teklov eigenvalues on nonnegatively curved manifolds.
\newblock {\em Electron. Res. Announc. Math. Sci.}, 24:100--109, 2017.

\bibitem[LMP23]{LMP}
Michael Levitin, Dan Mangoubi, and Iosif Polterovich.
\newblock {\em Topics in spectral geometry}, volume 237 of {\em Graduate Studies in Mathematics}.
\newblock American Mathematical Society, Providence, RI, [2023] \copyright2023.

\bibitem[M{\"{u}}l92]{Mul92}
Werner M{\"{u}}ller.
\newblock Spectral geometry and scattering theory for certain complete surfaces of finite volume.
\newblock {\em Invent. Math.}, 109(2):265--305, 1992.

\bibitem[OPS88]{OPS88}
Brad Osgood, Ralph Phillips, and Peter Sarnak.
\newblock Extremals of determinants of {L}aplacians.
\newblock {\em J. Funct. Anal.}, 80(1):148--211, 1988.

\bibitem[Per23]{Per23}
H\'el\`ene Perrin.
\newblock {\em In\'egalit\'e g\'eom\'etriques pour des valeurs propres de Steklov de graphes et de surfaces}.
\newblock PhD thesis, Universit\'e de Neuch\^atel, Neuch\^atel, Suisse, 2023.

\bibitem[Per24]{Per22}
H\'el\`ene Perrin.
\newblock Estimates for low steklov eigenvalues of surfaces with several boundary components.
\newblock {\em Annales math{\'e}matiques du Qu{\'e}bec}, 2024.
\newblock DOI 10.1007/s40316-024-00221-y.

\bibitem[Pol23]{Pol21}
Panagiotis Polymerakis.
\newblock On the {S}teklov spectrum of covering spaces and total spaces.
\newblock {\em Ann. Global Anal. Geom.}, 63(1):Paper No. 10, 22, 2023.

\bibitem[Rup21]{Rup21}
Melanie Rupflin.
\newblock Hyperbolic metrics on surfaces with boundary.
\newblock {\em J. Geom. Anal.}, 31(3):3117--3136, 2021.

\bibitem[SWY80]{SWY80}
Richard Schoen, Scott Wolpert, and Shing-Tung Yau.
\newblock Geometric bounds on the low eigenvalues of a compact surface.
\newblock In {\em Geometry of the {L}aplace operator ({P}roc. {S}ympos. {P}ure {M}ath., {U}niv. {H}awaii, {H}onolulu, {H}awaii, 1979)}, Proc. Sympos. Pure Math., XXXVI, pages 279--285. Amer. Math. Soc., Providence, R.I., 1980.

\bibitem[Wol82]{Wol}
Scott Wolpert.
\newblock A generalization of the {A}hlfors-{S}chwarz lemma.
\newblock {\em Proc. Amer. Math. Soc.}, 84(3):377--378, 1982.

\bibitem[WX22]{YY}
Yunhui Wu and Yuhao Xue.
\newblock Optimal lower bounds for first eigenvalues of {R}iemann surfaces for large genus.
\newblock {\em Amer. J. Math.}, 144(4):1087--1114, 2022.

\end{thebibliography}
\end{document}